\newtheorem*{rep@theorem}{\rep@title}
\newcommand{\newreptheorem}[2]{%
\newenvironment{rep#1}[1]{%
 \def\rep@title{#2 \ref{##1}}%
 \begin{rep@theorem}}%
 {\end{rep@theorem}}}
\newtheorem{intro_thm}{Theorem}
\newtheorem{intro_prop}[intro_thm]{Proposition}
\newtheorem{lemma}{Lemma}[section]
\newtheorem{thm}[lemma]{Theorem} 
\newtheorem{prop}[lemma]{Proposition}
\theoremstyle{definition}
\newtheorem{defn}[lemma]{Definition}
\newtheorem{es}[lemma]{Example}
\theoremstyle{remark}
\newtheorem{oss}[lemma]{Remark}
\newcommand\matH{{\mathbb{H}}^n_{\mathbb{C}}}
\newcommand\matR{{\mathbb{R}}}
\newcommand\matN{{\mathbb{N}}}
\newcommand\matC{{\mathbb{C}}}
\newcommand{\parallelsum}{\mathbin{\!/\mkern-5mu/\!}}
\newcommand{\id}{\mathrm{id}}
\newcommand\calQ{{\mathcal Q}}
\newcommand\calT{{\mathcal T}}
\newcommand\calC{{\mathcal C}}
\newcommand\calB{{\mathcal B}}
\newcommand\calF{{\mathcal F}}
\newcommand\calG{{\mathcal G}}
\newcommand\calH{{\mathcal H}}
\newcommand\calY{{\mathcal Y}}
\newcommand\calP{\mathcal{P}}
\newcommand\calI{\mathcal{I}_p}
\newcommand\calIk{\mathcal I_k(p,\infty)}
\newcommand\calIpq{\mathcal I_p(p,q)}
\newcommand\calX{\mathcal{X}}
\newcommand{\pup}{\textup{PU}(p,\infty)}
\newcommand{\pupq}{\textup{PU}(p,q)}
\newcommand{\puoneinf}{\textup{PU}(1,\infty)}
\newcommand{\su}{\textup{PU}(n,1)}
\newcommand{\uppq}{\textup{U}(p,q)}
\newcommand{\upinf}{\textup{U}(p,\infty)}
\newcommand{\Hm}{\textup{\textup{H}}}
\newcommand{\Hb}{\textup{\textup{H}}_{\text{b}}}
\newcommand{\Hcb}{\textup{\textup{H}}_{\text{cb}}}
\newcommand{\Hbul}{\textup{\textup{H}}^{\bullet}}
\newcommand{\Linf}{\text{L}^{\infty}}
\newcommand{\Tbdue}{\textup{T}_b^2}
\newcommand{\Sim}{\mathcal{X} (p,\infty)}
\newcommand{\Simpq}{\mathcal{X} (p,q)}
\newcommand{\GL}{\textup{GL}(\calH)}
\newcommand{\rk}{\text{rk}}
\newcommand{\EssIma}{\text{Ess Im}}
\newcommand{\Isom}{\text{Isom}}
\newcommand{\Stab}{\text{Stab}}
\newcommand{\dime}{\text{dim}}
\newcommand{\Span}{\text{Span}}
\newcommand{\Id}{\text{Id}}
\newcommand{\rad}{\text{rad}}
\newcommand{\diam}{\text{diam}}
\begin{document}

\title[Boundary maps for cocycles into CAT(0)-spaces]{Boundary maps and reducibility for cocycles into the isometries of CAT(0)-spaces}

\author[F. Sarti]{F. Sarti}
\address{Department of Mathematics, University of Pisa, Italy}
\email{filippo.sarti@dm.unipi.it}

\author[A. Savini]{A. Savini}
\address{Department of Mathematics, University of Milano-Bicocca, Milano, Italy}
\email{alessio.savini@unimib.it}

\date{\today \; \copyright{\ F. Sarti, A. Savini.
The first author is funded by MUR through the PRIN project ``Geometry and topology of manifolds". The authors are partially supported by INdAM--GNSAGA}}

\begin{abstract}
Let $\Gamma$ be a discrete countable group acting isometrically on a measurable field $\mathbf{X}$ of CAT(0)-spaces of finite telescopic dimension over some ergodic standard Borel probability $\Gamma$-space $(\Omega,\mu)$. If $\mathbf{X}$ does not admit any invariant Euclidean subfield, we prove that the measurable field $\widehat{\mathbf{X}}$ extended to a $\Gamma$-boundary admits an invariant section. In the case of constant fields this shows the existence of Furstenberg maps for measurable cocycles, extending results by Bader, Duchesne and L\'ecureux.

When $\Gamma<\su$ is a torsion-free lattice and the CAT(0)-space is $\mathcal{X}(p,\infty)$, we show that a maximal cocycle $\sigma:\Gamma \times \Omega \rightarrow \pup$ with a suitable boundary map is finitely reducible. As a consequence, we prove an infinite dimensional rigidity phenomenon for maximal cocycles in $\mathrm{PU}(1,\infty)$. 
\end{abstract}
  
\maketitle

\section{Introduction}

Boundary maps for representations in algebraic groups were first introduced by Furstenberg \cite{furstenberg:annals,furst:articolo73} and represent a powerful tool in the investigation of rigidity phenomena. Examples of results involving such maps are Mostow rigidity \cite{mostow68:articolo} and Margulis superrigidity \cite{margulis:super}. Boundary maps gained even more importance in the context of bounded cohomology, where their application to the work by Burger and Monod \cite{burger2:articolo} has generated a prolific literature \cite{iozzi02:articolo,BIcartan,bucher2:articolo,Pozzetti,BBIborel}.

More recently, several authors focused their attention on actions on CAT($\kappa$)-spaces and the associated boundary maps. For instance, CAT(-1)-spaces have been studied by Burger and Mozes \cite{burger:mozes} and by Monod and Shalom \cite{MonShal0}. Duchesne focused first on actions on the Hermitian symmetric space $\calX(p,\infty)$ \cite{duchesne:2013} and then, together with Bader and L\'ecureux, on a general CAT(0)-space $\mathcal{X}$ of finite telescopic dimension \cite{bader:duchesne:lecureux:16}. There the authors proved the existence of a boundary map $B\rightarrow \partial\calX$ whenever the action on $\calX$ is not elementary. In this setting $B$ denotes a $\Gamma$-boundary in the sense of Bader and Furman \cite{BF14}. Such boundary can be seen as an extension both of the Furstenberg-Poisson boundary \cite{furstenberg:annals} and of the strong boundary in the sense of Burger-Monod \cite{burger2:articolo}. In the general setting studied in \cite{bader:duchesne:lecureux:16}, we lose the structure of spherical building of $\partial\calX(p,\infty)$ exploited in \cite{duchesne:2013}, but we can still rely on the rich structure of CAT(0)-spaces. For instance, a useful tool is the Euclidean de-Rham decomposition (see \cite{bridson:haefliger:13}). Additionally,  when the telescopic dimension is finite, Caprace and Lytchak \cite{caprace:lytchak:09} proved that a filtering family of closed convex subspaces of a CAT(0)-space $\calX$ have a point fixed by $\Isom(\calX)$ in the bordification $\overline{\calX}$. 
Both in \cite{duchesne:2013} and in \cite{bader:duchesne:lecureux:16} the arguments rely on the notion of \emph{measurable fields of \textup{CAT(0)}-spaces}, that were first introduced by Anderegg and Henry \cite{anderegg:henry:14} and then developed by Duchesne \cite{duchesne:2013}.

Following the line of some recent works about measurable cocycles by the authors and Moraschini \cite{savini:20,moraschini:savini,moraschini:savini:2,sarti:savini:superrigidity,sarti:savini:parametrized,savini3:articolo}, in
the first part of this paper we prove a generalization of \cite[Theorem 1.1]{bader:duchesne:lecureux:16} to measurable fields. For all the definitions concerning measurable fields we refer to Section \ref{section_measurable_fields}. 

\begin{intro_thm}\label{theorem_boundary_map}
 Let $\Gamma$ be a discrete countable group, $(\Omega,\mu)$ be an ergodic standard Borel probability $\Gamma$-space and $B$ a $\Gamma$-boundary.
 Consider a measurable field $\mathbf{X}$ of complete separable \textup{CAT(0)}-spaces of finite telescopic dimension endowed with an isometric $\Gamma$-action. If $\mathbf{X}$ does not admit any invariant Euclidean subfield, then there exists an invariant section of the boundary field $\partial \widehat{\mathbf{X}}$, where $\widehat{\mathbf{X}}$ is the extension of $\mathbf{X}$ to the boundary $B$. 
\end{intro_thm}

An immediate consequence of the previous theorem is the existence of invariant sections for the boundary of constant fields and hence the existence of boundary maps for measurable cocycles. 

\begin{intro_prop}\label{prop_boundary_map}
Let $\Gamma$ be a discrete countable group, $(\Omega,\mu)$ be an ergodic standard Borel probability space and $B$ a $\Gamma$-boundary. Consider a complete separable \textup{CAT(0)}-space $\mathcal{X}$ of finite telescopic dimension and a measurable cocycle $\sigma: \Gamma \times \Omega \rightarrow \mathrm{Isom}(\mathcal{X})$. If $\mathcal{X}$ does not any admit Euclidean subfield on $\Omega$ which is $\sigma$-invariant, then there exists a boundary map $\phi:\Gamma \times \Omega \rightarrow \partial \mathcal{X}$ for $\sigma$. 
\end{intro_prop} 

The proof of Theorem \ref{theorem_boundary_map} is based on the arguments used in \cite[Theorem 1]{bader:duchesne:lecureux:16}, where the crucial point is the measurable version of the Adam-Ballmann theorem \cite[Theorem 1.8]{duchesne:2013}. Thanks to \cite[Proposition 8.11]{duchesne:2013} we can work with minimal invariant subfields. Moreover, by applying the measurable Euclidean de-Rham decomposition \cite[Proposition 9.2, Proposition 9.3]{duchesne:2013}, we reduce ourselves to the particular case when there exists an invariant minimal family of closed convex spaces with trivial Euclidean factors.

Starting from Theorem \ref{theorem_boundary_map} we investigate the case of the constant field $\calX=\calX(p,\infty)$.
Recently, Duchesne, L\'ecureux and Pozzetti \cite{duchesne:pozzetti} proved that any maximal representation $\rho:\Gamma\rightarrow \pup$
of a lattice $\Gamma<\su$, with $n\geq2$, preserves a finite dimensional totally geodesic
Hermitian symmetric space $\calY\subset \Sim$.
Moreover, 
under the additional hypothesis of Zariski density, they ruled out the existence of any such representation for any $p\geq 1$.

Motivated by such results, we will focus our attention on measurable cocycles $\sigma: \Gamma \times \Omega\rightarrow \pup$, where $\Gamma$ is a complex hyperbolic lattice 
in $\su$ and $(\Omega,\mu)$ is an ergodic standard Borel probability $\Gamma$-space. 
We actually need to assume something more, namely the existence of a boundary map $\partial\matH\times \Omega\rightarrow \calI$, where $\calI$ denotes the set of $p$-isotropic subspaces of $\calH=\matC^{p,\infty}$. The existence of those maps will be discussed in the last section.

The main difference with the finite dimensional case \cite{Pozzetti,sarti:savini:superrigidity} is that the group $\pup$ is not algebraic in the usual meaning. The absence of such structure motivates the notions of \emph{algebraic} and of \emph{standard algebraic subgroup} given by Duchesne, Pozzetti and L\'ecureux \cite{duchesne:pozzetti}. In this way they were able to define the notion of Zariski density inside $\pup$.

The lack of an algebraic structure can be overcome, for instance, when $\sigma$ is cohomologous to a cocycle whose image is contained in a \emph{finite dimensional} algebraic subgroup.
We call such cocycles \emph{finitely reducible}.
Using the machinery of numerical invariants and maximality developed by Moraschini and the second author \cite{moraschini:savini,moraschini:savini:2}, we get a statement similar to \cite[Theorem 6.7]{duchesne:pozzetti} for cocycles.

\begin{intro_thm}\label{theorem_reducibility}
 Let $\Gamma<\su$ be a complex hyperbolic lattice with $n\geq 1$ and let $(\Omega,\mu)$ be an ergodic standard Borel probability $\Gamma$-space. 
 Consider a measurable cocycle \mbox{$\sigma: \Gamma \times \Omega\rightarrow \pup$} with $p\geq 1$ and suppose that there exists a boundary map \mbox{$\phi: \partial\matH\times \Omega\rightarrow \calI$}. If $\sigma$ is maximal, then it is finitely reducible.
\end{intro_thm}

The structure of the proof is the following.
We first refine \cite[Proposition 6.2]{duchesne:pozzetti}, namely we show that any slice of the boundary map has image essentially contained in a unique copy of $\partial\calX(p,q)$ embedded in $\partial \Sim$ for some $p\leq q\leq np$. Since such construction varies measurably, ergodicity implies that $q$ does not depend on the slice. Using the transitive action of $\pup$ (Lemma \ref{proposition_embedding}), we twist the cocycle and the boundary map 
in such a way to find a cohomologous cocycle $\sigma^f$ and a boundary map $\phi^f$ with image of the latter essentially contained in some embedding
of $\partial\calX(p,q)$ in $\partial \Sim$, so that finite reducibility follows.

It seems natural to ask whether Theorem \ref{theorem_boundary_map} provides a suitable boundary map in the context of Theorem \ref{theorem_reducibility}. 
We notice that by our first result we have an equivariant map $\partial\matH \times \Omega\rightarrow  \mathcal{I}_k$ for some $k\leq p$. In particular, for cocycles $\sigma: \Gamma \times \Omega\rightarrow \puoneinf$, since maximality implies non-elementarity, Theorem \ref{theorem_boundary_map} provides a boundary map $\partial\matH \times \Omega\rightarrow  \partial\mathbb{H}^{\infty}_{\matC}$ and, by applying Theorem \ref{theorem_reducibility} and \cite[Theorem 2]{sarti:savini:superrigidity}, we get the following version of Mostow rigidity for infinite dimensional cocycles.

\begin{intro_thm}\label{corollary_mostow}
Let $\Gamma<\su$ be a complex hyperbolic lattice with $n\geq 1$ and let $(\Omega,\mu)$ be an ergodic standard Borel probability $\Gamma$-space. 
Any maximal cocycle $\sigma :\Gamma \times \Omega \rightarrow \puoneinf$ is cohomologous to a cocycle preserving a copy of $\matH\subset \mathbb{H}^{\infty}_{\matC}$
and acting on it via the standard lattice embedding.
\end{intro_thm}

\paragraph{\textbf{Plan of the paper.}} The paper is divided into three sections.
Section \ref{section_boundary_maps} focuses on the existence of invariant boundary sections and boundary maps. After a brief introduction of basics about CAT(0)-spaces (Section \ref{section_cat0}), we define measurable fields of metric and CAT(0)-spaces. We recall the measurable Euclidean de-Rham decomposition and the measurable version of Adam-Ballmann theorem (Section \ref{section_measurable_fields}). 
Then we define boundaries and we prove Theorem \ref{theorem_boundary_map} (Section \ref{section:existence}). We conclude the section recalling the notion of boundary maps and proving Proposition \ref{prop_boundary_map}. 

Section \ref{section:reducibility} is devoted to reducibility of cocycles into the isometries of $\calX(p,\infty)$.
We first recall some notions about bounded cohomology (Section \ref{section_bounded_cohomology}). Then we introduce Hermitian symmetric spaces and we characterize embeddings 
of $\Simpq$ inside $\Sim$ (Section \ref{section_symmetric_space}).
In this context, we define the \emph{Toledo invariant} associated to a measurable cocycle, passing 
through the definition of \emph{Bergman class} and the machinery developed by \cite{moraschini:savini} about numerical invariants of measurable cocycles (Section \ref{section_khaler} and \ref{section_toledo}).
Then we move to the notion of \emph{algebraic} and \emph{finite dimensional algebraic subgroup} of $\GL$ (Section \ref{section_algebraic}) and we
finally provide the proof of Theorem \ref{theorem_reducibility} (Section \ref{section_proof_reducibility}).

We conclude with Section \ref{section_consequences}, where we prove Theorem \ref{corollary_mostow}.

\vspace{ 0.5 cm}

\paragraph{\textbf{Acknowledgments.}} We would like to thank Maria Beatrice Pozzetti and Bruno Duchesne for the enlightening conversations
and important suggestions about our work. We are also grateful to Stefano Francaviglia for his supervision and comments about the project.
Finally, we wish to thank the anonymous referees for their comments and corrections that improved the quality of this paper. 

\section{Existence of invariant boundary sections and boundary maps}\label{section_boundary_maps}

This section is devoted to prove the existence of boundary maps for measurable cocycles in the isometries of a complete separable CAT(0)-space. After a brief introduction about CAT(0)-spaces, measurable fields and boundaries in the sense of Bader and Furman \cite{BF14}, we give the proof of Theorem \ref{theorem_boundary_map}. Then we move to measurable cocycles and boundary maps to prove Proposition \ref{prop_boundary_map}

\subsection{CAT(0)-spaces}\label{section_cat0}

A metric space $(\calX,d)$ is a CAT(0)-\emph{space} if it is geodesic and for every triple of distinct points $x,y,z\in \calX$, given a point $m$ in the geodesic segment between $y$ and $z$, the following inequality holds
$$d(x,m)^2\leq \frac{1}{2} (d(x,y)^2+d(x,z)^2)-\frac{1}{4} d(y,z)^2.$$
For the purposes of this paper, CAT(0)-spaces will be always assumed to be \emph{complete} and \emph{separable}.
 
Since embedded flats into CAT(0)-spaces play an important role in the study of their geometry, we recall the following decomposition into Euclidean and non-Euclidean factors. 
Precisely, the \emph{Euclidean de-Rham decomposition} of a CAT(0)-space $\calX$ is its canonical isometric splitting into an Hilbert space $H$ and a factor $Z$ which cannot be further decomposed as a product with non-trivial Euclidean factor \cite[Theorem 6.15]{bridson:haefliger:13}. 
Moreover, for every point $x\in \calX$ the space $H$ (respectively $Z$) identifies with a unique closed convex subspaces of $\calX$ containing $x$. 
 
Given a subset $\calY\subset \calX$ of a metric space, its \emph{diameter} is defined as
$$\diam(\calY)\coloneqq\sup_{x,y\in \calY} d(x,y),$$
and $\calY$ is said to be \emph{bounded} if it has finite diameter.
A convex bounded set $\calY$ has some preferred points called \emph{circumcenters}, which are the centers of balls of minimal radius containing $\calY$. Notice that, without the assumption of convexity, one can still give the notion of circumcenter but such points may not belong to $\calY$. In the case of CAT(0)-spaces it turns out that every bounded subset has a unique circumcenter, which we call \emph{center}. An equivalent definition can be given in terms of actions of isometries. Precisely, the center of a bounded subset $\calY\subset \calX$ of a CAT(0)-space is the unique point fixed by any isometry stabilizing $\calY$. 
 
Before introducing the notion of telescopic dimension, we need the one of \emph{geometric dimension}. This concept was first introduced by Kleiner \cite{kleiner:99} in terms of the space of directions at each point, and then has been reformulated by Caprace and Lytchack \cite[Theorem 1.3]{caprace:lytchak:09} in the following way. 
If $\calX$ is a CAT(0)-space, then its geometric dimension is $\leq n$ if for each subset $\calY$ of finite diameter the following inequality holds
$$\rad(\calY)\leq \sqrt{\frac{n}{2(n+1)}}\diam (\calY)\ .$$
The number $\rad(\calY)$ is the \emph{circumradius} of $\calY$, namely the infimum of all $r>0$ such that $\calY$ is contained in some closed ball of radius $r$. 
The result by Caprace and Lytchack leads to a characterization of telescopic dimension, originally given by Kleiner \cite{kleiner:99}, that we assume here as a definition (refer to \cite{caprace:lytchak:09} for more details). 
\begin{defn}
A CAT(0)-space $\calX$ has \emph{telescopic dimension} $\leq n$ if for any $\delta >0$ there exists some constant $D>0$ such that for every bounded set $\calY$ of diameter $>D$, we have
$$\rad(\calY)\leq\left( \delta +\sqrt{\frac{n}{2(n+1)}} \right)\diam (\calY).$$
\end{defn}

As we will recall in Section \ref{section_symmetric_space}, the Hermitian symmetric space $\calX(p,\infty)$ is a CAT(0)-space of telescopic dimension $p$ \cite[Corollary 1.4]{duchesne:2013}. This implies that the visual boundary $\partial\calX(p,\infty)$ has geometric dimension $p-1$ \cite[Proposition 2.1]{caprace:lytchak:09}.

For a complete CAT(0)-space $\calX$ with finite telescopic dimension, Caprace and Lytchak proved that every filtering family of closed convex subspaces of $\calX$ either intersects at $\calX$ or at $\partial\calX$ \cite[Theorem 1.1]{caprace:lytchak:09}. 
Notice that this is equivalent to quasi-compactness of the bordification $\overline{\calX}=\calX\cup \partial\calX $ endowed with the topology defined by Monod \cite[Section 3.7]{monod:06}.
The following technical result is an example of application of \cite[Theorem 1.1]{caprace:lytchak:09}, and it turned out to be the useful in the proof of \cite[Theorem 1.1]{bader:duchesne:lecureux:16} and \cite[Theorem 1.7]{duchesne:pozzetti}. It will be exploited to prove Theorem \ref{theorem_boundary_map}.

\begin{prop}[{\cite[Proposition 2.1]{bader:duchesne:lecureux:16}}]\label{proposition:bdl}
Let $E$ be an Euclidean space and $f:E \rightarrow \matR$ be a convex function. 
If we denote by $m=\inf\{f(x)\,|\, x\in E \}$, then we have the following four possible cases:
\begin{itemize}
\item[(i)] If $m$ is not attained, then $\bigcap\limits_{\epsilon>0} \partial E_\epsilon\neq \emptyset$ where $E_{\epsilon}\coloneqq  f^{-1}((m,m+\epsilon))$ is not empty and has a center. 
\end{itemize}
If $m$ is attained, we denote by $E_m=f^{-1}(m)$ and by $E_m=F\times T$ its Euclidean de-Rham decomposition. Then one of the following holds
\begin{itemize}
\item[(ii)] $E_m$ is bounded and thus it has a center;
\item[(iii)] 	$T$ is bounded and $\partial E_m=\partial F$ is a sphere;
\item[(iv)] $T$ is not bounded and $\partial T\subset \partial E$ has radius less then $\frac{\pi}{2}$.
\end{itemize}
\end{prop}

Notice that, as mentioned in point $(iii)$, boundaries of flats are Euclidean spheres, that can be also interpreted as CAT(1)-spaces. 
In particular, boundaries of maximal flats are subcomplexes, called apartments, of the building structure of the visual boundary. We refer to \cite{abramenko08} for the general theory of such building. We only point out that the existence of circumcenters for bounded subsets \cite[Proposition 2.7]{bridson:haefliger:13} holds also in this case. 
More precisely, every subset of radius at most $\frac{\pi}{2}$ in a sphere has a center, and this property will be used in the proof of Theorem \ref{theorem_boundary_map}.

\subsection{Measurable fields and the Adam-Ballmann dichotomy}\label{section_measurable_fields}
In this section we introduce measurable fields of metric spaces together with some results that we will exploit in the next section to prove the existence of boundary maps. 
We refer to Anderegg and Henry \cite{anderegg:henry:14} for the general theory of measurable fields and to Duchesne \cite{duchesne:2013} for the measurable version of both the Euclidean de-Rham decomposition and the Adam-Ballmann dichotomy.

\begin{defn}\label{definition_measurable_field}
Given a standard probability space $(\Omega,\mu)$, a \emph{measurable field of metric spaces} on $\Omega$ is a collection of metric spaces $\textbf{\textup{X}}=\{X_{\omega}\}_{\omega\in \Omega}$ together with a countable family $\calF\subset \prod\limits_{\omega\in \Omega} X_{\omega}$
such that
\begin{itemize}
\item for all $x,y\in \calF$ the map $\omega\mapsto d_{\omega}(x_\omega,y_\omega)$ is measurable;
\item for almost every $\omega\in \Omega$, the set $\{f_{\omega}\,|\, f\in  \calF\}$ is dense in $X_\omega$.
\end{itemize}
A \emph{section} of $\textbf{\textup{X}}$ is an element $x\in \prod\limits_{\omega\in \Omega} X_{\omega}$ such that, for every $y\in \calF$ the map $\omega\mapsto d_{\omega}(x_{\omega},y_{\omega})$ is measurable.\\
If the $X_{\omega}$'s are CAT(0)-spaces, then we call $\textbf{X}$ a {measurable field of CAT(0)-spaces}. A \emph{subfield} $\textbf{\textup{Y}}$ of $\textbf{\textup{X}}$ is a collection of non-empty closed convex subsets $Y_{\omega}\subset X_{\omega}$ such that, for every section $\Omega$ of $\textbf{\textup{X}}$ the map $\omega\mapsto d_{\omega} (x_{\omega},Y_{\omega})$ is measurable.
\end{defn}

If $G$ is a locally compact group acting on a standard probability space $(\Omega,\mu)$ by preserving the measure class, we say that $\Omega$ is a \emph{Lebesgue} $G$-space. A $G$-\emph{action} on $\textbf{\textup{X}}$ is a collection $\{\sigma( g,\omega)\}_{g\in G,\omega\in \Omega}$ where
\begin{itemize}
\item for every $g\in G$ and almost every $\omega\in \Omega$, we have $\sigma(g,\omega)\in \Isom (X_{\omega},X_{g\omega})$;
\item for every $g,h\in G$ and almost every $\omega\in \Omega$, the following equality holds
\begin{equation}\label{cocycle_condition_fields}
\sigma(gh,\omega)=\sigma(g,h \omega)\sigma(h,\omega);
\end{equation}
\item for every $x,y\in \calF$, the map $(g,\omega)\mapsto d(x_{\omega},\sigma (g,g^{-1} \omega) y_{g^{-1}\omega})$ is measurable.
\end{itemize}

\begin{es}\label{ex:constant:field}
Given a standard Borel probability space $(\Omega,\mu)$ and a complete separable metric space $\mathcal{X}$, we can build a measurable field $\textbf{X}$ by setting $X_{\omega}\coloneqq \mathcal{X}$ and taking as fundamental family the collection 
$\{f^x \}_{x \in \mathcal{X}_0}$ where 
$\mathcal{X}_0\subset \mathcal{X}$ is a countable dense subset and 
$f^x_{\omega}\coloneqq 
  x $ for every $\omega\in\Omega$. 

Given a locally compact group $G$ such that $(\Omega,\mu)$ is a Lebesgue $G$-space, a $G$-action $\{\sigma( g,\omega)\}_{g\in G,\omega\in \Omega}$ on $\textbf{X}$ boils down to a map $\sigma:G\times \Omega\to \Isom (\mathcal{X})$ satisfying Equation \ref{cocycle_condition_fields} and such that the function
$$
(g,\omega) \mapsto d(x,\sigma(g,g^{-1}\omega)y)
$$
is measurable for all $x,y \in \mathcal{X}_0$. 
\end{es}

\begin{es}\label{ex:extended:field}
Let $G$ be a locally compact group and consider a Lebesgue $G$-space $(\Omega,\mu)$. Given another Lebesgue $G$-space $(\Theta,\nu)$, the product space $(\Omega \times \Theta, \mu \otimes \nu)$ is again a Lebesgue $G$-space with respect to the diagonal action. Consider a measurable field $\mathbf{X}=\{\mathcal{X}_\omega\}_{\omega \in \Omega}$ on $\Omega$ of CAT(0)-spaces of finite telescopic dimension with an isometric $G$-action $\{\sigma(g,\omega)\}_{g \in G, \omega \in \Omega}$. We can consider the \emph{extension} $\widehat{\mathbf{X}}$ of the field $\mathbf{X}$ to the space $\Theta$ in the following way:
\begin{itemize}
\item We set $\widehat{\mathcal{X}}_{(\omega,\theta)}:=\mathcal{X}_\omega$ for $(\omega,\theta) \in \Omega \times \Theta$. 
\item Any section $x_\omega$ of $\mathbf{X}$ can be changed into a section of $\widehat{\mathbf{X}}$ by setting $\widehat{x}_{(\omega,\theta)}:=x_\omega$. 
\item We define an isometric $G$-action on $\widehat{\mathbf{X}}$ by setting $\{ \widehat{\sigma}(g,\omega,\theta):=\sigma(g,\omega) \}$ for all $g \in G, \omega \in \Omega, \theta \in \Theta$.  
\end{itemize}
\end{es}

A $G$-action $\{\sigma( g,\omega)\}_{g\in G,\omega\in \Omega}$ on a measurable field $\textbf{\textup{X}}$ of CAT(0)-spaces induces a natural $G$-action on both sections and subfields. Similarly, we have an induced action on the boundary field $\partial \mathbf{X}$ of $\mathbf{X}$. The latter is obtained by collecting the boundaries of each $\mathcal{X}_\omega$ and by controlling measurability via the Busemann functions. The $G$-action on $\mathbf{X}$ induces an action on the sections of $\partial \mathbf{X}$ given by $(g\xi)_\omega=(\sigma(g,g^{-1}\omega)\xi_{g^{-1}\omega})$. Furthermore, a subfield $\textbf{\textup{Y}}\subset \textbf{\textup{X}}$ is \emph{minimal} if its invariant under the $G$-action and it does not contain a proper invariant subfield. 

As proved by Caprace and Lytchak \cite[Proposition 1.8]{caprace:lytchak:09}, 
any isometric action of a locally compact group on a complete CAT(0)-space of finite telescopic dimension either has a fixed point in the boundary or admits an invariant non-empty closed convex subset which is minimal with respect to inclusion. This allows to reduce the investigation of existence of boundary maps to minimal actions (see \cite[Theorem 1.7]{duchesne:2013} and \cite[Theorem 1.1]{bader:duchesne:lecureux:16}).
The following result can be seen as the generalization of  \cite[Proposition 1.8]{caprace:lytchak:09} to measurable fields.

\begin{prop}[{\cite[Proposition 8.11]{duchesne:2013}}]\label{proposition_minimal_subfield}
Let $G$ be a locally compact group and let $(\Omega,\mu)$ be a Lebesgue $G$-space. Suppose that $\textbf{\textup{X}}$ is a measurable field on $\Omega$ of \textup{CAT(0)}-spaces of finite telescopic dimension, $G$ acts on $\textbf{\textup{X}}$ and $\Omega$ is ergodic. Then either there exists a minimal invariant subfield of $\textbf{\textup{X}}$ or there exists an invariant section of $\partial \textbf{\textup{X}}$.
\end{prop}

A second construction that we will use is the extension of the Euclidean de-Rham decomposition for measurable fields of CAT(0)-spaces.
\begin{prop}[{\cite[Proposition 9.2]{duchesne:2013}}]\label{proposition_euclidean_decomposition}
Let $G$ be a locally compact group and let $(\Omega,\mu)$ be a Lebesgue $G$-space. Let $x$ be a section of a measurable field $\mathbf{X}$ on $\Omega$ of \textup{CAT(0)}-spaces of finite telescopic dimension. Suppose $G$ acts minimally on $\mathbf{X}$ via $\sigma=\{\sigma(g,\omega)\}_{g\in G,\omega\in \Omega}$ and assume that the action $\Omega$ is ergodic. There exists $n\in \matN$ and two subfields $\textbf{\textup{E}}$ and $\textbf{\textup{Y}}$ of $\textbf{\textup{X}}$ containing $x$ such that $\textbf{\textup{X}}=\textbf{\textup{E}}\times\textbf{\textup{Y}}$ and $E_{\omega}\cong \matR^n$ for almost every $\omega\in \Omega$. Moreover, $\textbf{\textup{E}}$ is maximal for those properties.

If $y$ is another section of $\textbf{\textup{X}}$ and $\textbf{\textup{X}}=\textbf{\textup{E'}}\times\textbf{\textup{Y'}}$ is another such decomposition associated to $y$ then for almost every $\omega\in \Omega$, the projections $\pi_{E_{\omega}|E_{\omega}^{'}}$ and $\pi_{Y_{\omega}|Y_{\omega}^{'}}$ are isometries. As a consequence the $G$-action on $\textbf{\textup{X}}$ splits as 
$$\sigma( g,\omega)=\sigma_{\textbf{\textup{E}}}( g,\omega)\times \sigma_{\textbf{\textup{Y}}}( g,\omega)$$ where $\{\sigma_{\textbf{\textup{E}}}( g,\omega)\}_{g\in G,\omega\in \Omega}$ and $\{\sigma_{\textbf{\textup{Y}}}( g,\omega)\}_{g\in G,\omega\in \Omega}$ are respectively actions on $\textbf{\textup{E}}$ and $\textbf{\textup{Y}}$.

\end{prop}

The last preliminary result that we recall is the measurable version of the Adam-Ballmann dichotomy \cite{adam:ballmann:88}. In order to state it, we need to recall the definition of \emph{amenable space} due to Zimmer \cite{Zimmer}. 
Given a locally compact second countable group $G$, a Lebesgue $G$-space $(\Omega,\mu)$ is \emph{$G$-amenable} if 
for every $G$-action on a separable measurable field $\textbf{E}$ of Banach spaces over $\Omega$ and every $G$-invariant subfield $\textbf{K}$ of weakly compact subsets of the unit balls of $\textbf{E}^*$, there exists an invariant section of $\textbf{K}$.

We conclude the section with the following 
 
\begin{thm}[{\cite[Theorem 1.8]{duchesne:2013}}]\label{theorem_adam_Ballmann}
Let $G$ be a locally compact second countable group and $(\Omega,\mu)$ be a Lebesgue $G$-space which is ergodic and amenable. Let $\mathbf{X}$ be a measurable field on $\Omega$ of complete \textup{CAT(0)}-spaces of finite telescopic dimension. If $G$ acts on $\textbf{\textup{X}}$, then either there is an invariant section of the boundary field $\partial\textbf{\textup{X}}$ or there exists an invariant Euclidean subfield of $\textbf{\textup{X}}$.
\end{thm}

\subsection{Existence of invariant sections for extended fields}\label{section:existence}

In this section we prove Theorem \ref{theorem_boundary_map}. We need first to recall the definition of boundary in the sense of Bader and Furman \cite{BF14}. 

\begin{defn}\label{def:rel:iso:ergodic}
Let $G$ be a locally compact second countable group. A \emph{fiberwise isometric $G$-action} on a measurable map $p:M\rightarrow T$ between standard Borel spaces is a $G$-invariant Borel map 
  $d:M\times_p M\rightarrow \matR_{\geq 0}$ such that any fiber $p^{-1}(t)\subset M$ endowed with the induced metric $d_{|p^{-1}(t)\times p^{-1}(t)}$ is a separable metric space on which $G$ acts in a compatible way, namely
  $$d(g x,g y)= d(x,y)\ ,$$
  for every $g\in G$ and every $x,y\in M$ with $p(x)=p(y)$.

A map $q:X\rightarrow Y$ between Lebesgue $G$-spaces is \emph{relatively metrically ergodic} if for any fiberwise isometric $G$-action on a measurable map $p:M\rightarrow T$ between standard Borel spaces and measurable $G$-equivariant maps $f:X\rightarrow M$ and $h:Y\rightarrow T$ there exists a measurable $G$-equivariant map $\psi:Y\rightarrow M$ such that the following diagram commutes
\begin{center}
\begin{tikzcd}
X \arrow{r}{f}\arrow{d}{q}     & M \arrow{d}{p}   \\
Y \arrow{r}{h}\arrow[dotted]{ru}{\psi}    & T.
\end{tikzcd}
\end{center}
\end{defn}

 \begin{defn}[{\cite[Definition 2.3]{BF14}}]\label{def_boundary}
 Let $\Gamma$ be a locally compact and second countable group. A $\Gamma$\emph{-boundary} is an amenable Lebesgue $\Gamma$-space $(B,\nu)$ such that the projections $\pi_1:B\times B\rightarrow B$ and $\pi_2:B\times B\rightarrow B$ on the first and second factor, respectively, are relatively metrically ergodic.
 \end{defn}

 \begin{oss} The notion of $\Gamma$-boundary contains other versions of boundaries.
 \begin{itemize} 
 \item[(i)] The Furstenberg-Poisson boundary of a locally compact second countable group \cite{furstenberg:annals} turns out to be a boundary in the sense of Definition \ref{def_boundary} \cite[Theorem 2.7]{BF14}.
 \item[(ii)] Suppose that $\Gamma<H$ is a lattice into a connected semi-simple Lie group of non-compact type. Given a minimal parabolic subgroup $P<H$, the quotient $H/P$ is the Furstenberg-Poisson boundary for $\Gamma$ and hence a $\Gamma$-boundary by \cite[Theorem 2.3]{BF14}. 
\item[(iii)] In general, a $\Gamma$-boundary is a strong $\Gamma$-boundary in the sense of Burger and Monod \cite{burger2:articolo},\cite[Remarks 2.4]{BF14}.
 \end{itemize}
 \end{oss}
 
Since the arguments in the proof of Theorem \ref{theorem_boundary_map} strongly rely on the objects introduced in Section \ref{section_measurable_fields}, we recall the following result, due to Duchesne, L\'ecureux and Pozzetti.

\begin{lemma}{\cite[Lemma 4.11]{duchesne:pozzetti}}\label{lemma_dlp}
Let $\Gamma$ be a countable group and let $\textbf{\textup{X}}$ be a measurable field over a Lebesgue $\Gamma$-space $(\Omega,\mu)$. Then there exists a full-measure subset $\Omega_0\subset\Omega$, a standard Borel structure on $X\coloneqq \bigsqcup\limits_{\omega
\in \Omega_0} X_{\omega}$ and a Borel map $p:X \rightarrow \Omega_0$ that admits a fiberwise isometric $\Gamma$-action. Moreover, $p^{-1}(\omega)$ is $X_{\omega}$ with the metric $d_{\omega}$.
\end{lemma}

\begin{proof}[Proof of Theorem \ref{theorem_boundary_map}]

Without loss of generality we can suppose that the $\Gamma$-action on the measurable field $\mathbf{X}:=\{\mathcal{X}_\omega\}_{\omega \in \Omega}$ is minimal.
 In fact by Proposition \ref{proposition_minimal_subfield} either we have a minimal subfield $\textbf{\textup{Y}}\subset\textbf{\textup{X}}$ on $\Omega$ or there exists an invariant section of $\partial \textbf{\textup{X}}$. In the second case the same section can be viewed as a section of the boundary field $\partial \widehat{\mathbf{X}}$ of the extension of $\mathbf{X}$ on $B$, and hence we would conclude. 

According to Proposition \ref{proposition_euclidean_decomposition}, we consider the Euclidean de-Rham decomposition $\textbf{\textup{Y}}=\textbf{\textup{F}}\times \textbf{\textup{Z}}$ and we denote by $\sigma_{\textbf{\textup{Z}}}$ and $\sigma_{\textbf{\textup{Y}}}$ the $\Gamma$-actions induced respectively on $\textbf{\textup{Z}}$ and $\textbf{\textup{Y}}$.
We claim that the $\Gamma$-action $\sigma_{\textbf{\textup{Z}}}$ on $\textbf{\textup{Z}}$ is minimal. By contradiction, assume that it is not. Thus, by Proposition \ref{proposition_minimal_subfield}, there exists a minimal invariant subfield $\textbf{W}\subset \textbf{\textup{Z}}$ whose product with $\textbf{F}$ is a strict subfield of $\textbf{\textup{F}}\times \textbf{\textup{Z}}=\textbf{\textup{Y}}$, contradicting the minimality of $\textbf{\textup{Y}}$. Moreover, any $\sigma_{\mathbf{Z}}$-invariant Euclidean subfield of $\mathbf{Z}$ would produce an invariant Euclidean subfield of $\mathbf{X}$ and this would contradict the hypothesis. Notice also that the boundary $\partial Z_\omega$ is contained in $\partial Y_\omega$ and a fortiori in $\partial \mathcal{X}_\omega$. 

Now we have a measurable field $\mathbf{Z}$ on $\Omega$ which is minimal and it does not admit any invariant Euclidean subfield. Following Example \ref{ex:extended:field}, we consider the extension $\widehat{\mathbf{Z}}$ of the field $\mathbf{Z}$ to $B$. By \cite[Proposition 2.4]{MonShal0} the spaces $B\times \Omega$ and $B\times B\times \Omega$ are ergodic $\Gamma$-spaces. By \cite[Proposition 4.3.4]{zimmer:libro}  $B\times \Omega$ is also $\Gamma$-amenable. 
In this context we apply \cite[Theorem 1.8]{duchesne:2013} to $\widehat{\mathbf{Z}}$ and we have two possible cases: either there exists a section of $\partial \widehat{\mathbf{Z}}$ or there exists an invariant Euclidean subfield $\textbf{\textup{E}}\subset \widehat{\mathbf{Z}}$.

We consider the distance map 
$$d:B\times B\times \Omega\rightarrow \matR\,,\;\;\; (\xi_1,\xi_2,\omega)\mapsto d(E_{\xi_1,\omega}, E_{\xi_2,\omega})\coloneqq \inf\limits_{y\in E_{\xi_1,\omega}} d(y,E_{\xi_2,\omega})$$ 
where $\textbf{\textup{E}}=\{E_{\xi,\omega}\}_{(\xi,\omega)\in B\times \Omega}$.
Following \cite{bader:duchesne:lecureux:16}, we have four possible cases, and by ergodicity one of them must happen almost surely.  For the same reason, the distance map is essentially equal to some value, say $d_0$, for almost every $\omega\in \Omega$ and $\xi_1,\xi_2\in B$.

\emph{Case (i):} Suppose that $d_0$ it
is not attained for almost every $\omega\in \Omega$ and $\xi_1,\xi_2\in B$. Hence for almost every $\omega\in \Omega$ and $\xi_1\in B$ we can define the 
subspaces
$$E^n_{\xi_1,\xi_2,\omega}\coloneqq \left\{ y\in E_{\xi_1,\omega}\, | \, d(y,E_{\xi_2,\omega})< d_0+\frac{1}{n} \right\}$$
which are nested subspaces of $E_{\xi_1,\omega}$. 
By \cite[Proposition 8.10]{duchesne:2013} we have a $\sigma$-equivariant map 
$$\psi:B\times B\times \Omega\rightarrow  \partial \textbf{\textup{E}}\, .$$
To apply correctly \cite[Proposition 8.10]{duchesne:2013} we are considering the extended field $\{E'_{\xi_1,\xi_2,\omega}\}_{(\xi_1,\xi_2,\omega)\in B\times B\times \Omega}$ such that $E'_{\xi_1,\xi_2,\omega}=E_{\xi_1,\omega}$ for every $\omega\in \Omega$ and $\xi_1,\xi_2\in B$.
It follows directly from Lemma \ref{lemma_dlp} that the projection $p$ of (a full-measure subset of) $\partial \mathbf{E}$ on $ B\times \Omega$ has a $\Gamma$-fiberwise isometric action. The metric structure on each fiber is the same as the one given in \cite{bader:duchesne:lecureux:16}. Thus we can apply relative metric ergodicity to the following diagram 
\begin{center}
\begin{tikzcd}
B\times B \arrow{r}{\Psi}\arrow{d}{\pi_1}     & \mathrm{L}^0(\Omega,\partial \textbf{\textup{E}}) \arrow{d}{p_{\Omega}}   \\
B \arrow{r}{j} & \mathrm{L}^0(\Omega,  B\times \Omega).
\end{tikzcd}
\end{center}

Here $\mathrm{L}^0(\Omega,  \cdot )$ denotes the space of measurable sections identified $\mu$-almost everywhere with the standard measurable structure coming from the topology of convergence in measure \cite[Section 4.4]{Wheeden},\cite[Notation 2.4]{fisher:morris:whyte}, $\Psi$ is defined by $\Psi(\xi_1,\xi_2)(x)\coloneqq \psi(x,\xi_1,\xi_2)$, $j$ is given by $j(\xi)(x)\coloneqq (\xi,\omega)$, $\pi_1$ is the projection on the first factor and $p_{\Omega}$ is defined as $p_{\Omega}(f)(x)\coloneqq p(f(\omega))$. The function $p_{\Omega}$ can be equipped with a fiberwise isometric $\Gamma$-action obtained by the one on $p$ by integrating along $\Omega$ the functions in each fiber (see \cite[Theorem 1]{sarti:savini:superrigidity}).

By relative metric ergodicity we have a lifting $B\rightarrow \mathrm{L}^0(\Omega,\partial \textbf{\textup{E}})$, thus
$\Psi$ does not depend on the second factor. Hence we have a $\sigma$-invariant map $B\times \Omega\rightarrow \partial \textbf{\textup{E}}\subset \partial \widehat{\mathbf{Z}}$, whose existence is ruled out by the dichotomy of Theorem \ref{theorem_adam_Ballmann}.

We can suppose that the distance $d_{\xi_1,\xi_2,\omega}$ is attained almost surely and we define the non-empty subsets
$$W_{\xi_1,\xi_2,\omega}\coloneqq \{ w\in E_{\xi_1,\omega}\,|\, d(w,E_{\xi_2,\omega})=d_0\}\subset E_{\xi_1,\omega}.$$

\emph{Case (ii):}
If the $W_{\xi_1,\xi_2,\omega}$'s are bounded, we can associate to any such subset its circumcenter $c_{\xi_1,\xi_2,\omega}$. The map 
$$\psi:B\times B\times \Omega\rightarrow  \textbf{\textup{E}}\,,\;\;\; \psi(\xi_1,\xi_2,\omega)\coloneqq c_{\xi_1,\xi_2,\omega}$$
is $\sigma$-equivariant, and by applying twice the relative metric ergodicity we obtain a map 
$\psi: \Omega\rightarrow\textbf{\textup{E}}$ such that 
$$\psi(\gamma \omega)=\sigma(\gamma,\omega) \psi (\omega).$$
Since points are 0-dimensional flats, this contradicts the hypothesis on $\mathbf{X}$.

Thus the $W_{\xi_1,\xi_2,\omega}$'s are not bounded and we can consider their Euclidean de-Rham decomposition 
$$W_{\xi_1,\xi_2,\omega}=F_{\xi_1,\xi_2,\omega}\times T_{\xi_1,\xi_2,\omega},$$
where the $F_{\xi_1,\xi_2,\omega}$'s are maximal Euclidean factors. 

\emph{Case (iii):} If $T_{\xi_1,\xi_2,\omega}$ is not bounded, as in case (i) we realize a map 
$$\psi:B\times B\times \Omega\rightarrow \partial \textbf{\textup{T}}\,,\;\;\; \psi(\xi_1,\xi_2,\omega)\coloneqq c_{\xi_1,\xi_2,\omega}$$
where $c_{\xi_1,\xi_2,\omega}$ is the center of $\partial T_{\xi_1,\xi_2,\omega}$ and 
$\textbf{\textup{T}}$ denotes the measurable field given by the $T_{\xi_1,\xi_2,\omega}$'s.
Notice that $c_{\xi_1,\xi_2,\omega}$ can be defined thanks to Proposition \ref{proposition:bdl}(iv).
Using the same arguments of case (i), we get a contradiction.

\emph{Case (iv):} Finally, if the $T_{\xi_1,\xi_2,\omega}$'s are bounded we consider a subfield $\textbf{\textup{E'}}$ of $\textbf{\textup{E}}$ whose sheets are defined as follows 
$$E'_{\xi_1,\xi_2,\omega}\coloneqq F_{\xi_1,\xi_2,\omega}\times \{t_{\xi_1,\xi_2,\omega}\}$$
for every $\omega\in \Omega$ and $\xi_1,\xi_2\in B$, where $t_{\xi_1,\xi_2,\omega}$ is the circumcenter of 
$T_{\xi_1,\xi_2,\omega}$.
The same argument used in \cite{bader:duchesne:lecureux:16} shows that in fact $E_{\xi_1,\xi_2,\omega}'=E_{\xi_1,\omega}$ for almost every $\omega\in \Omega$ and $\xi_1,\xi_2\in B$. Moreover, $E_{\xi,\omega}$ and $E_{\xi',\omega}$ are \emph{parallel}
for almost every $\omega\in \Omega$ and almost every $\xi,\xi'\in B$. Recall that two Euclidean subspaces are parallel if the restriction on the first one of the distance to the second one is constant and vice-versa (in this context, the Sandwich Lemma \cite[Exercise II.2.12(2)]{bridson:haefliger:13} guarantees that their convex hull splits isometrically as $\mathbb{R}^n \times  [0,d_0]$, for some $n$). 
We use the notation
$$E_{\xi,\omega} \parallelsum E_{\xi',\omega}\,.$$ 


By Fubini's theorem there exists an element $\xi_0\in B$ and a full-measure subset $\Delta \subset B\times \Omega$ such that 
$$E_{\xi,\omega}\parallelsum E_{\xi_0,\omega}$$ 
for every $(\xi,\omega)\in \Delta$.
We denote by $\Delta^{\Gamma}= \bigcap\limits_{\gamma\in \Gamma} \gamma\Delta$ which is still of full-measure since $\Gamma$ is countable. 
We consider the set 
$$C_{\omega} \coloneqq \textup{ convex hull } (\{E_{\xi,\omega}\}_{(\xi,\omega)\in \Delta^{\Gamma}}),$$
that can be decomposed into Euclidean de-Rham factors $E_{\omega}\times T_{\omega}$ such that 
\begin{equation}\label{equation_parallel}
 E_{\omega} \parallelsum E_{\xi,\omega} \parallelsum E_{\xi_0,\omega}
\end{equation}
for every $(\xi,\omega	)\in \Delta^{\Gamma}$.
Moreover, for almost every $\omega\in \Omega$ and $\gamma\in \Gamma$, we have
\begin{align*}
\sigma_{\textbf{\textup{Z}}} (\gamma,\omega)C_{\omega} &=\textup{ convex hull }(\sigma_{\textbf{\textup{Z}}}(\gamma,\omega) E_{\xi,\omega})_{(\xi,\omega)\in \Delta^{\Gamma}}\\
&=\textup{ convex hull }(E_{\gamma \xi,\gamma \omega})_{(\xi,\omega)\in \Delta^{\Gamma}}\\
&= \textup{ convex hull }(E_{ \xi,\gamma \omega})_{(\xi,\gamma \omega)\in \Delta^{\Gamma}}=C_{\gamma \omega}\,,
\end{align*}
where to pass from the first line to the second one we used the fact that $\textbf{E}$ is a subfield of $\textbf{Z}$ and to pass from the second line to the third one we exploited the action on $\Delta^{\Gamma}$. 
Now, by the minimality of $\textbf{\textup{Z}}$
we must have $C_{\omega}=Z_{\omega}$ for almost every $\omega\in \Omega$
and since $Z_{\omega}$ has trivial Euclidean factor, by Equation \eqref{equation_parallel} we have 
$$\dime (E_{\xi,\omega})=0$$
for every $(\xi,\omega)\in \Delta^{\Gamma}$.
Hence we have a section $B\times \Omega\rightarrow \textbf{Z}$ and, by the same argument used in \emph{case (ii)}, we have a contradiction.
\end{proof}

\subsection{Measurable cocycles and boundary maps}\label{section_cocycles}

In this section we finally prove Proposition \ref{prop_boundary_map}. We will first need a short introduction to measurable cocycles. We refer to \cite{moraschini:savini,moraschini:savini:2,sarti:savini:superrigidity} for further details. We will assume that $G$ is a locally compact and second countable group endowed with its Haar measurable structure, $H$ is a topological group endowed with its Borel $\sigma$-algebra and
$(\Omega,\mu)$ be a standard Borel probability space endowed with a measure preserving $G$-action. 

\begin{defn}\label{cocycle_definition}
 A \emph{measurable cocycle} is a measurable function $\sigma: G\times \Omega\rightarrow H$ such that
 \begin{equation*}
 \sigma(g_1g_2,\omega)=\sigma(g_1,g_2\omega)\sigma(g_2,\omega)
 \end{equation*}
 holds for almost every $g_1,g_2\in G$ and for almost every $\omega\in \Omega$. 
\end{defn}

For the reader who is confident with measured groupoid theory, measurable cocycles are almost representations of the measured groupoid $G \times \Omega$ with values in $H$. The notion of homotopic representations can be rephrased as follows.

\begin{defn}\label{twisted_cocycle_definition}
 Let $\sigma_1,\sigma_2:G\times \Omega \rightarrow H$ be two measurable cocycles, let 
 $f:\Omega\rightarrow H$ be a measurable map and denote by $\sigma_1^f$ the cocycle defined as
 $$\sigma_1^f(g,\omega)\coloneqq f(g\omega)^{-1}\sigma_1(g,\omega)f(\omega)$$
 for every $g\in G$ and almost every $\omega\in \Omega$. The cocycle $\sigma^f_1$ is the \emph{$f$-twisted cocycle associated to $\sigma_1$}. 
 We say that $\sigma_1$ is \emph{cohomologous} to $\sigma_2$ if there exists a measurable map $f$
 such that $\sigma_2=\sigma_1^f$.
\end{defn}

\begin{es} \label{ex:measurable:cocycles}.
Even if there are plenty of examples of measurable cocycles in different areas, we recall the following ones, since they play a predominant role in our results. 
\begin{itemize}
\item[(i)] For any standard Borel probability $G$-space, a homomorphism $\rho:G\rightarrow H$ between a locally compact and second countable group $G$ and a topological group $H$ defines a cocycle as follows
$$\sigma_{\rho}:G\times \Omega\rightarrow H\, , \;\;\;\; \sigma_{\rho}(g,\omega)\coloneqq \rho(g).$$
Conjugated representations gives cohomologous cocycles in the sense of Definition \ref{twisted_cocycle_definition}.

\item[(ii)] Let $\mathcal{X}$ be a CAT(0)-space of finite telescopic dimension. Fix a discrete countable group $\Gamma$ and a standard Borel probability $\Gamma$-space. We can consider the constant field $\mathbf{X}=\{ \mathcal{X} \}_{\omega \in \Omega}$ on $\Omega$. Following Example \ref{ex:constant:field}, we know that an isometric $\Gamma$-action boils down to a function $\sigma:\Gamma \times \Omega \rightarrow \mathrm{Isom}(\mathcal{X})$ which satisfies Equation \ref{cocycle_condition_fields} and is measurable with respect to the topology on $\mathrm{Isom}(\mathcal{X})$ induced by the family of pseudometrics $(g,h)\mapsto (gx,hx)$, where $x \in \mathcal{X}_0$ varies in a countable dense subset of $\mathcal{X}$.

Viceversa, any cocycle $\sigma:\Gamma \times \Omega \rightarrow \mathrm{Isom}(\mathcal{X})$ which is measurable with respect to the Borel structure induced by the previous pseudometrics on $\mathrm{Isom}(\mathcal{X})$ gives rise to an isometric $\Gamma$-action on the constant field $\mathbf{X}$. 

\end{itemize}
\end{es}

Now we are ready to give the definition of boundary map. 
\begin{defn}\label{def_boundary_map}
Let $\Gamma$ be a discrete countable group, $(\Omega,\mu)$ be a standard Borel probability $\Gamma$-space, $H$ a topological group and $Y$ a measurable $H$-space. Consider a measurable cocycle $\sigma:\Gamma \times \Omega \rightarrow H$. For any $\Gamma$-boundary $B$, a \emph{boundary map} is a measurable map
$$\phi:B \times \Omega \rightarrow Y 
$$
which is \emph{$\sigma$-equivariant}, namely
$$
\phi(\gamma b,\gamma \omega)=\sigma(\gamma,\omega)\phi(b,\omega) 
$$
for every $\gamma \in \Gamma$ and almost every $b \in B, \omega\in \Omega$. 
\end{defn}

We are finally ready to prove the existence of boundary maps. 

\begin{proof}[Proof of Proposition \ref{prop_boundary_map}]
We consider the constant field $\mathbf{X}:=\{\mathcal{X}\}_{\omega \in \Omega}$ given by Example \ref{ex:constant:field}. By Example \ref{ex:measurable:cocycles}(ii) the measurable cocycle $\sigma:\Gamma \times \Omega \rightarrow \mathrm{Isom}(\mathcal{X})$ induces an isometric $G$-action on $\mathbf{X}$. Since we assumed by hypothesis that there are no invariant Euclidean subfield of $\mathbf{X}$, we can apply Theorem \ref{theorem_boundary_map} to obtain an invariant section of the boundary field $\partial \widehat{\mathbf{X}}$, where $\widehat{\mathbf{X}}$ is the extension of $\mathbf{X}$ to $B$. By \cite[Lemma 3.10]{bader:duchesne:lecureux:16} this is equivalent to a boundary map $\phi:B \times \Omega \rightarrow \partial \mathcal{X}$. 
\end{proof}

\begin{oss}\label{remark_boundary_fiinite_dimensional}
Fix positive integers $n$ and $p\leq q$.
In the setting of Proposition \ref{prop_boundary_map}, suppose that $\Gamma$ is a complex hyperbolic lattice in $\textup{PU}(n,1)$ and $\calX(p,q)$ denotes the Hermitian symmetric space associated to the group $\textup{SU}(p,q)$ (see Section \ref{section_symmetric_space}). By Proposition \ref{prop_boundary_map} we have a boundary map $\phi:\partial\mathbb{H}^n\times \Omega\rightarrow \partial \calX(p,q)$.
 By the ergodicity of $B \times \Omega$, we have that $\phi$ takes values in the set of isotropic $k$-dimensional subspace in the boundary $\partial \calX(p,q)$, for some $k\leq p$. 
 To see this, for each pair $(\xi,\omega)\in\partial\matH\times \Omega$ one can take the smallest cell in the spherical building of $\partial\calX(p,q)$ which contains $\phi(\xi,\omega)$, that corresponds to a totally isotropic flag of $\matC^{p,q}$ (see \cite{duchesne:2013}). By ergodicity the type of this flag must be the same for almost every pair in $\partial\matH\times \Omega$ and by taking the maximal isotropic spaces of any flag we get the desired map. 
 If we assume that $\sigma$ is Zariski dense, the same argument in \cite[Theorem 1.7]{duchesne:pozzetti} show that $k=p$, namely the target is the Shilov boundary of $\calX(p,q)$. 

Now, since Zariski density implies non-elementarity, this gives an alternative proof of \cite[Theorem 1]{sarti:savini:superrigidity}.
\end{oss}

\section{Finite reducibility}\label{section:reducibility}

In the second part of this paper we study cocycle actions on the Hermitian symmetric space $\calX(p,\infty)$. 
We first give a brief overview about bounded cohomology in order to define the \emph{Toledo invariant} and \emph{maximal cocycles}. Then, we recall basic facts about $\calX(p,\infty)$ and we parametrize the space of embeddings of $\calX(p,q)$ inside $\calX(p,\infty)$. Later, we characterize \emph{finite algebraic subgroups}, and we use this notion to define \emph{finite reducibility}. 
Finally we prove Theorem \ref{theorem_reducibility}.

\subsection{Bounded cohomology} \label{section_bounded_cohomology}
Let $G$ be a locally compact and second countable group and let $E$ be a Banach $G$-module 
(namely a Banach space together with an action of $G$ by isometries). Continuous bounded cohomology is usually defined via the complex of continuous bounded functions on $G$. Burger-Monod \cite{burger2:articolo} showed that more generally we can consider the cohomology of any strong resolution of $E$ by relatively injective $G$-modules. More precisely, we have that the 
\emph{continuous bounded cohomology} of $G$
with coefficients in $E$ is the cohomology of the $G$-invariant vectors of any such resolution $(E^{\bullet},\delta^{\bullet})$, namely
$$\Hcb^k(G;E)\cong \Hm^k((E^{\bullet})^G,\delta^{\bullet}).$$

If $E$ is the dual of some Banach $G$-module endowed with the weak-* topology
and assuming that $G$ is a semisimple Lie group of non-compact type, we can
define the cochain complex of essentially bounded weak-* measurable functions on the Furstenberg boundary $B(G)$, denoted by
$(\Linf_{\textup{w}^\ast}(B(G)^{\bullet+1};E),\delta^{\bullet})$, where $\delta^{\bullet}$ is the standard homogeneous coboundary operator.
Since the previous complex can be completed to a strong resolution of $E$ by relatively injective modules, we have an isomorphism
\begin{equation}\label{eq_bm}
\Hcb^k(G;E)\cong \Hm^k(\Linf_{\textup{w}^\ast}(B(G)^{\bullet+1};E)^G,\delta^{\bullet})
\end{equation}
for any $k\geq 0$. By \cite[Corollary 1.5.3]{burger2:articolo} the isomorphism is actually isometric, that is it preserves the natural seminormed structures on those spaces. 

If we consider the complex of bounded weak-* measurable functions on a measurable $G$-space $X$,
denoted by $(\calB^{\infty}_{\textup{w}^\ast}(X^{\bullet+1};E),\delta^{\bullet})$,
we obtain only a strong resolution of $E$.
Nevertheless, Burger and Iozzi \cite{burger:articolo} showed that there exists a canonical nontrivial map
\begin{equation}\label{canonical_map}
\mathfrak{c}^k:\Hm^k(\calB^{\infty}(X^{\bullet+1};E)^G,\delta^{\bullet})\rightarrow \Hcb^k(G;E) 
\end{equation}
for every $k\geq 0$.

Let $\Gamma<G$ be a lattice. As in the case of representations, given a measurable cocycle $\sigma:\Gamma \times \Omega\rightarrow H$, there exists a natural notion of pullback in bounded cohomology. More precisely, for any Banach $H$-module $E$, the map
\begin{gather}\label{eq_pullback}
\textup{C}_b^\bullet(\sigma):\textup{C}_{cb}^{\bullet}(H;E)^H \rightarrow \textup{C}_{cb}^{\bullet}(\Gamma;E)^\Gamma \ ,
\\
\textup{C}_b^\bullet(\sigma)(\psi)(\gamma_0,\ldots,\gamma_\bullet):=\int_{\Omega} \psi(\sigma(\gamma^{-1}_0,\omega)^{-1},\ldots,\sigma(\gamma^{-1}_\bullet,\omega)^{-1})d\mu(\omega) \ , \nonumber
\end{gather} 
is a well-defined cochain map \cite[Lemma 2.7]{savini:20} inducing a map at the level of cohomology groups
$$
\Hb^k(\sigma):\Hcb^{k}(H;E) \rightarrow \Hb^{k}(\Gamma;E)\ , \ \Hb^k(\sigma)([\psi]):=[\textup{C}_b^k(\sigma)(\psi)] \ . 
$$
for every $k \geq 0$. If $\sigma$ additionally admits a boundary map $\phi:B(G) \times \Omega \rightarrow Y$, one can define
\begin{gather}\label{eq_pullback_boundary}
\textup{C}^\bullet(\Phi^{\Omega}):\calB^\infty(Y^{\bullet+1};E)^H \rightarrow \textup{L}^\infty_{\textup{w}^*}(B(G)^{\bullet+1};E)^\Gamma \ ,
\\ 
\textup{C}^\bullet(\Phi^{\Omega})(\psi)(\xi_0,\ldots,\xi_\bullet):=\int_{\Omega} \psi(\phi(\xi_0,\omega),\ldots,\phi(\xi_\bullet,x))d\mu(\omega) \ . 
\nonumber
\end{gather}

As shown by the second author and Moraschini \cite{moraschini:savini,moraschini:savini:2}, the above map is a cochain map which does not increase the norm and it induces well-defined maps in cohomology 
$$
\textup{H}^k(\Phi^{\Omega}):\textup{H}^k(\calB^\infty(Y^{\bullet+1};E)^H) \rightarrow \textup{H}^k_b(\Gamma;E) \ , \ \textup{H}^k(\Phi^{\Omega})([\psi]):=[\textup{C}^k(\Phi^{\Omega})(\psi)]  
$$
for every $k\geq 0$.

Thanks to \cite[Lemma 2.10]{savini:20}, one can check that the class $\Hb^k(\sigma)([\psi])$ admits as a natural representative the cocycle $\textup{C}^k(\Phi^{\Omega})(\psi)$.

\subsection{The symmetric space $\Simpq$.}\label{section_symmetric_space}
Let $(p,q)\in\matN\times\matN \cup \{\infty\}$ with $p\leq q$ and consider a $(p+q)$-dimensional Hilbert space $\mathcal{H}$ over $\matC$.
Let $\{e_i\}_{i=1}^{p+q}$ be an Hilbert basis for $\calH$. 
We denote by $L(\calH)$ the set of $\matC$-linear bounded operators with respect to the operator norm and by
 $\GL$ the group of bounded invertible $\matC$-linear operators of $\calH$ with bounded inverse. 

We define the Hermitian form $Q$
of signature $(p,q)$ as follows 
\begin{equation*}
 Q(x)= \sum\limits_{i=1}^p x_i\overline{x}_i - \sum\limits_{i= p+1}^{p+q} x_i\overline{x}_i 
\end{equation*}
where $x=\sum\limits_{i=1}^{p+q} x_ie_i$ for all $x\in \calH$.
We denote with $\uppq$ the subgroup of $\GL$ of isometries with respect to $Q$, that means linear maps $h:\calH\rightarrow\calH$ such that $Q(v,w)=Q(h(v),h(w))$ for all $v,w\in\calH$. 
If we define the space
$$\Simpq \coloneqq \left\{V<\calH\;|\;  \dime V=p \; , \; Q_{|V}>0\right\}, $$
then by Witt's theorem the group $\uppq$ acts transitively on it \cite[Theorem 3.9]{artin}.
Moreover, the stabilizer of $V_0\coloneqq\Span \{e_1,\ldots,e_p\}$
is the product $\textup{U}(p)\times \textup{U}(q)$, where $\textup{U}(m)$ is the orthogonal group of the Hilbert space of dimension $m$,
for any $m \in \matN \cup \{\infty\}$.
Hence we can identify $\Simpq$ with the quotient $$\uppq/\textup{U}(p)\times \textup{U}(q)$$
and one can show that it has a structure of simply connected non-positively curved Riemannian symmetric space with real rank $p$ \cite{duchesne:2013}.

Homotheties act trivially on $\Simpq$, so we have an isometric action by isometries of the quotient	
$$\pupq\coloneqq \uppq/\{\lambda \Id\;,\; |\lambda|=1\}$$
on $\Simpq$. 

We define the boundary $\partial\Simpq$ as the set of subspaces of $\calH$ on which the restriction of $Q$ is identically zero (that is \emph{totally isotropic subspaces}).
In $\partial\Simpq$, for each $1\leq k\leq p$, we denote as $\mathcal{I}_k(p,q)$ (or simply $\mathcal{I}_k$) the set of totally isotropic subspaces of dimension $k$. 
In particular, we will be interested in the set $\mathcal{I}_p$ of maximal totally isotropic subspaces. Finally, two points in $\calIpq$ defined by totally isotropic subspaces $V_1$ and $V_2$ are said to be \emph{opposite} if $V_1\cap V_2=0$.

We denote by $Q_{p,q}$ the Hermitian form of signature $(p,q)$ with $p\leq q<+\infty$. 
Let $\{E_i\}_{i=1}^{p+q}$ and $(e_i)_{i\in\matN}$ be two basis of $\matC^{p,q}$ and of an infinite dimensional Hilbert space $\calH$ over $\matC$, respectively.
 
\begin{defn}\label{definition_embedding}
An \emph{embedding} of $\Simpq$ into $\Sim$ is a linear map
$\iota:\matC^{p,q}\rightarrow \calH$ that preserves the Hermitian forms $Q_{p,q}$ and $Q_{p,\infty}$, namely
$Q_{p,\infty}(\iota(x),\iota(y))=Q_{p,q}(x,y)$ for every $x,y\in \matC^{p,q}$.
The group $\uppq$ of linear bounded transformations preserving $Q_{p,q}$ embeds in $\upinf$ in the following way:
the action on $\iota(\matC^{p,q})$ is the one of $\uppq$ and is trivial on the orthogonal complement of $\iota(\matC^{p,q})$. 
\end{defn}

Among all embeddings of $\Simpq$ in $\Sim$, we consider the \emph{standard embedding} defined by the 
map $\iota_0:\matC^{p,q}\rightarrow \calH$ defined as $\iota_0(E_i)=e_i$ for $i=1,\ldots,p+q$.
In this special case, the space $\Simpq$ inside $\Sim$ can be identified with the set
$$\mathcal{V}_0=\{V<\Span\{e_1,\ldots,e_{p+q}\}\; | \;\dime V=p \;,\; Q_{p,\infty|V}>0\}$$
and the group $\uppq$ is identified with elements $g$ in $\upinf$ such that
$$g(e_i)=\sum\limits_{j\in \matN} a_{ij} e_j$$
where, for either $i$ or $j$ bigger than $p+q$, then $a_{ij}=\delta_{ij}$, and the matrix $A=(a_{ij})_{i,j=1}^{p+q}$ represents an element in $\uppq$,
namely it satisfies $$A^* \begin{pmatrix}
                                   \Id_p &0\\
                                   0 & -\Id_q
                                  \end{pmatrix} A=\begin{pmatrix}
                                   \Id_p &0\\
                                   0 & -\Id_q
                                  \end{pmatrix} .$$

The notion of embedding given in Definition \ref{definition_embedding} corresponds to the one of standard embedding given in \cite{duchesne:pozzetti}. 
This choice is motivated by the fact that here we need to distinguish the particular objects described above, whose role among all embeddings is clarified by the following 
\begin{lemma}\label{proposition_embedding}
 Any embedding of $\Simpq$ of $\Sim$ can be obtained by composition of an element $g\in\upinf$  with the standard embedding.
\end{lemma}
\begin{proof}
 Let $\iota:\matC^{p,q}\rightarrow\calH$ an isometric linear map. 
 For each $e_i$ we set $u_i\coloneqq \iota(e_i)$ and 
 $$U_{\iota}\coloneqq \Span\{u_1,\ldots,u_{p+q}\}.$$
There is a natural identification of $\Simpq$ with the subspace of $\Sim$ defined by
 $$\mathcal{V}_{\iota}=\{V<U_{\iota}\; | \;\dime V=p \;,\; Q_{p,\infty|V}>0\}.$$
 If we denote with $U_0$ the subspace of $\calH$ spanned by the first $p+q$ vectors of the basis $(e_i)_{i\in\matN}$,
 we can define an isometric linear map
 $h:U_0\rightarrow U_{\iota}$
 on the basis as follows
 $$h(e_i)=u_i$$
 and then extend it by linearity.
 Since $h$ preserves the Hermitian form $Q$, by Witt's theorem it extends to an isometry of $\calH$ with respect to $Q$, namely to an element $g\in\upinf$. 
 The thesis follows noticing that the isometric linear map $g\circ \iota$ actually gives the standard embedding.
\end{proof}

\begin{oss}\label{remark_embeddings}
As a subspace of the Grassmannian $\textup{Gr}(p+q,\calH)$, the set of embedding of $\calX(p,q)$ inside $\calX(p,\infty)$ naturally inherits the topology induced by principal angles, that in this case coincides with the Wisjman topology (see \cite{duchesne:pozzetti}). 
 Since by Lemma \ref{proposition_embedding} the group $\upinf$ acts transitively on the set of all such embeddings, we have an identification with the $\pup$-orbit of the standard embedding in $\textup{Gr}(p+q,\calH)$. 
Moreover, this can be identified with the quotient $\pup/\Stab_{\pup}V_0$, where $V_0$ is the image of the standard embedding.
\end{oss}

\subsection{The K\"{a}hler class and the Bergman cocycle.}\label{section_khaler}
A crucial difference between the finite case and the infinite one in the context of symmetric spaces is that $\pupq$ is locally compact
for $q<\infty$, whereas $\pup$ is not. 
To overcome this problem we will deal with
the bounded cohomology groups $\Hbul_{\textup{b}}(\pup;\matR)$, namely its continuous bounded
cohomology if we endow $\pup$ with the discrete topology.

Since $\Sim$ is an Hermitian symmetric space, there exists a K\"{a}hler form $\omega$, that is a $\pup$-invariant closed 2-form on $\Sim$. 
Using such an invariant form we can define
$$\omega_x:\pup^3\rightarrow \matR,\;\;\;\;\omega_x(g_0,g_1,g_2)=\frac{1}{\pi}\int_{\Delta(g_0x,g_1x,g_2x)} \omega $$
where $x$ is a point in $\Sim$ and $\Delta(g_0x,g_1x,g_2x)$ is a triangle in $\Sim$ with vertices
$g_0x,g_1x,g_2x$ and geodesic edges. The map $\omega_x$
defines a strict $\pup$-invariant cocycle and by \cite[Lemma 5.3]{duchesne:pozzetti} different choices of the basepoint $\Omega$ lead to cohomologous cocycles.
In this way we obtain a well-defined cohomology class $k^b_{\pup} \in \Hb^2(\pup;\matR)$, called \emph{bounded K\"{a}hler class} of $\pup$.
Now, taking the Gromov norm $||\cdot||_{\infty}$, it follows from the definition that 
\begin{equation}\label{equation_norm_kahler}
||k^b_{\pup}||_{\infty}=\rk\Sim =p. 
\end{equation}

We will need to define the Bergman class extending the one given in finite case, namely to construct a cocycle on the boundary $\calI$.

Given any three maximal totally isotropic subspaces $V_0,V_1,V_2 \in \calI$, since they are contained in a finite dimensional subspace of dimension at most $3p$, 
we can use the definition of the Bergman cocycle associated to $\textup{SU}(p,2p)$ to get a strict $\pup$-invariant cocycle 
$$\beta:\calI^3\rightarrow [-p,p].$$
We recall that the maximal value is taken on triples of pairwise opposite
totally isotropic subspaces which lie in 
a $2p$-dimensional subspace \cite[Proposition 2.1]{Pozzetti}.
Now, given a point $V\in \calI$, the cocycle $C_V$ defined as
$$C_V(g_0,g_1,g_2)=\beta(g_0V,g_1V,g_2V)$$
still represents the bounded K\"{a}hler class $k^b_{\pup}\in \Hb^2(\pup;\matR)$  \cite[Lemma 5.4]{duchesne:pozzetti}.

\subsection{The Toledo invariant.}\label{section_toledo}
Let $\Gamma<\su$ be a complex hyperbolic lattice, $(\Omega,\mu)$ be a standard Borel probability $\Gamma$-space and
let $\sigma:\Gamma \times \Omega\rightarrow \pup$ be a measurable cocycle. 
Following \cite{sarti:savini:superrigidity},
we define the transfer map
$$\Tbdue:\Hb^2(\Gamma;\matR)\rightarrow \Hcb^2(\su;\matR)$$
as the map induced in cohomology by the function
$$\widehat{\Tbdue}:\Linf((\partial\matH)^3;\matR)^{\Gamma}\rightarrow \Linf((\partial\matH)^3;\matR)^{\su} \  ,$$ 
$$\widehat{\Tbdue}(c)(\xi_0,\xi_1,\xi_2)=\int\limits_{\Gamma\backslash\su} c(\bar{g}\xi_0,\bar{g}\xi_1,\bar{g}\xi_2)d\mu_{\Gamma\backslash\su}(\bar{g}) \  .$$
Since $\Hcb^2(\su;\matR)=\matR k^b_{\su}$, where $k^b_{\su}$ is the bounded K\"{a}hler class of $\su$, by composing with the pullback of Equation \eqref{eq_pullback}, we get
\begin{equation}\label{toledo}
 \Tbdue\circ\Hb^2(\sigma) (k^b_{\pup})=\textup{t}_{\sigma}k^b_{\su}
\end{equation}
for some real number $\textup{t}_\sigma$.
\begin{defn}
Given a lattice $\Gamma < \textup{PU}(n,1)$ and a standard Borel $\Gamma$-space $(\Omega,\mu)$, let $\sigma:\Gamma \times \Omega \rightarrow \pup$ be a measurable cocycle. The number $\textup{t}_{\sigma}$ is the \emph{Toledo invariant} associated to $\sigma$.
\end{defn}

Since both $\Tbdue$ and $\Hb^2(\sigma)$ are norm non-increasing, by Equation \eqref{equation_norm_kahler} we have $|\textup{t}_{\sigma}|\leq p$ and the following
\begin{defn}
 Given $\Gamma< \textup{PU}(n,1)$ and a standard Borel $\Gamma$-space $(\Omega,\mu)$,  a measurable cocycle $\sigma:\Gamma \times \Omega \rightarrow \pup$ is \emph{maximal} if its Toledo invariant is equal to $p$. 
\end{defn}

If $\sigma$ admits a boundary map
$\phi: \partial\matH \times \Omega\rightarrow \calI$, exploiting the version of the pull back given by Equation \eqref{eq_pullback_boundary}, we get a map  
$$\Tbdue\circ\Hm^2(\Phi^{\Omega}): \Hm^2 ((\calB^{\infty}(\calI)^{\bullet+1};\matR)^{\pup})\rightarrow \Hb^2(\su;\matR).$$
In this way we can rewrite Equation \eqref{toledo} as follows
$$
\textup{T}_b^2 \circ \text{H}^2_b(\Phi^{\Omega})([\beta])=t_\sigma \kappa^b_{\textup{PU}(n,1)} \ .
$$
Writing down the above equation in terms of cochains, we get the formula
\begin{equation}\label{formula}
 \int\limits_{\Gamma\backslash\su} \int\limits_{\Omega} \beta(\phi(\bar{g}\xi_0,\omega),\phi(\bar{g}\xi_1,\omega),\phi(\bar{g}\xi_2,\omega))d\mu(\omega)d\mu_{\Gamma\backslash \su}(\bar{g})=\textup{t}_{\sigma} \cdot c_n(\xi_0,\xi_1,\xi_2)
\end{equation}
that holds for every triple of distinct points $(\xi_0,\xi_1,\xi_2)$ in $\partial\matH$ \cite{Pozzetti},\cite{sarti:savini:superrigidity}. Here $c_n$ is the Cartan's angular 
invariant that represents the bounded K\"{a}hler class of $\su$ \cite{Goldmancomplex}.

\subsection{Algebraic subgroups of $\GL$.}\label{section_algebraic}
We first introduce the notion of polynomial map.
\begin{defn}
 A map $f:L(\calH)\rightarrow \matR$ is a \emph{polynomial map} if it is a finite sum of maps $f_1,\ldots,f_k$ where
 for each $i=1,\ldots,k$ there exists a $n_i$-linear map $h_i\in L^{n_i}(L(\calH),\matR)$ such that $f_i(g)=h_i(g,\ldots,g)$ for every $g\in L(\calH)$. 
 The \emph{degree} of $f$ is the maximum of the $n_i$'s.
\end{defn}
Now, in parallel to the finite dimensional case, we define an algebraic subgroup as the set of the zero locus of some family of polynomial maps.
\begin{defn}
 A subgroup $G$ of $\GL$ is \emph{algebraic} if there exists a positive integer $n$ and family $\calP$ of polynomial maps of degrees at most $n$
 such that 
 $$G=\{g\in \GL \;|\; P(g,g^{-1})=0 \;,\; \forall P\in\calP\}.$$

 A \emph{strict algebraic subgroup} is a proper algebraic subgroup of $\GL$.
\end{defn}
To define a linear algebraic subgroup of $\textup{GL}(n,\matR)$ we consider polynomial equations in matrix coefficients.
The generalization to infinite dimension of this notion is the content of the following definition.
\begin{defn}[{\cite[Definition 3.4]{duchesne:pozzetti}}]\label{def_polynomial_maps}
 Let $\calH$ be an infinite dimensional Hilbert space and choose an orthonormal basis $(e_n)_{n \in \matN}$. 
 A homogeneous polynomial map $P:L(\calH)\times L(\calH)\rightarrow \matR$ is \emph{standard} of degree $d$ if there exist two naturals $\ell,m$
 such that $\ell+m=d$
 and a family of real coefficients $(\lambda_i)_{i\in\matN^{2\ell}}$ and $(\mu_j)_{j\in\matN^{2m}}$ such that for any $(M,N)\in L(\calH)\times L(\calH)$
 we have that $P$ can be expressed as the absolute convergent series 
 $$P(M,N)=\sum\limits_{i\in\matN^{2\ell},j\in\matN^{2m} } \lambda_i\mu_j P_i(M)P_j(N) $$ where
 $P_i(M)=\prod\limits_{k=0}^{\ell-1} <Me_{i_{2k}},e_{i_{2k+1}}> $ and $P_j(N)=\prod\limits_{k=0}^{m-1} <Ne_{j_{2k}},e_{j_{2k+1}}> $.

 A \emph{standard polynomial map} is a finite sum of standard homogeneous polynomial maps.
 
 An algebraic subgroup of $L(\calH)$ is \emph{standard} if it is defined by a family of standard polynomial maps.
 \end{defn}

Hence we have the following interesting property, that shows how proper standard 
algebraic subgroups are closely related to finite dimensional subspace of $\calH$.
\begin{lemma}\cite[Lemma 3.6]{duchesne:pozzetti}\label{lemma_support}
 If $H$ is a strict standard algebraic group, then there exists a finite dimensional subspace $E$ of $\calH$ such 
 that the the group $H_E\coloneqq\{g\in H \;|\; g(E)=E\;,\; g_{|E^{\bot}}=\id\}$ is a strict algebraic subgroup of $\textup{GL}(E)$.
\end{lemma}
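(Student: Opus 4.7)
The strategy is to exploit the absolute convergence of the defining standard polynomial series in Definition~\ref{def_polynomial_maps} to reduce the problem to a finite-dimensional one.

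Since $H$ is strict, at least one standard polynomial map $P$ in the defining family is not identically zero on $\{(g,g^{-1}) : g\in\GL\}$, so fix $g_0\in\GL$ with $P(g_0,g_0^{-1})\neq 0$. I would first write $P$ explicitly as an absolutely convergent series $\sum_{i,j}\lambda_i\mu_j P_i(M) P_j(N)$. The absolute convergence does two things at once: on the one hand $P$ is continuous on bounded subsets of $L(\calH)\times L(\calH)$ in a sufficiently weak topology, and on the other hand some finite truncation of the series, involving only basis vectors indexed by a finite set $F\subset\matN$, already satisfies $|P^{(F)}(g_0,g_0^{-1})-P(g_0,g_0^{-1})|<|P(g_0,g_0^{-1})|/2$.

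Next I would take $E\subset\calH$ to be the span of $\{e_i: i\in F\}$ and show that every defining polynomial of $H$ restricts to an honest polynomial on $\textup{GL}(E)$. Concretely, any $g\in\textup{GL}(E)$ extends to $\tilde g\in\GL$ with $\tilde g|_{E^\perp}=\id$, so $\langle \tilde g e_r,e_s\rangle=\delta_{rs}$ whenever $r$ or $s$ lies outside $F$. Substituting $\tilde g$ into a term $P_i(\tilde g)$ either kills it (mixed indices with distinct labels) or collapses it to a product of matrix entries of $g|_E$ (matched or interior indices). The absolute convergence ensures the tail sums over $E^\perp$-indices contribute finite constants. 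Thus each defining polynomial of $H$ yields a genuine polynomial on $\textup{GL}(E)$, which exhibits $H_E$ as an algebraic subgroup of $\textup{GL}(E)$.

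The final and most delicate step is to show that, for $E$ chosen large enough, $H_E$ is \emph{strict}, i.e.\ at least one of these restricted polynomials is not identically zero on $\textup{GL}(E)$. I would argue by contradiction: if the restriction of $P$ to block elements vanished identically for every finite-dimensional $E$, then $P$ would vanish on all invertible finitely-supported perturbations of the identity; approximating $g_0$ by such block elements in a topology where $P$ is continuous (using the absolute convergence, most likely the strong operator topology on norm-bounded sets) would force $P(g_0,g_0^{-1})=0$, a contradiction. The main obstacle is precisely this density-and-continuity step: block elements are not norm-dense in $\GL$, so one must carefully choose the topology, check continuity of $P$ (and of inversion) there, and ensure the approximants $\tilde g_n$ remain invertible with $\tilde g_n^{-1}\to g_0^{-1}$ in the same topology, so that the evaluation $(g,g^{-1})\mapsto P(g,g^{-1})$ passes to the limit.
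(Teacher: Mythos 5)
Your plan has the right skeleton — truncate the absolutely convergent series, take $E$ spanned by the finitely many basis vectors involved, and observe that substituting a block element $\tilde g = g'\oplus\id_{E^\perp}$ collapses each monomial $P_i$ either to $0$ or to a monomial in the entries of $g'$, so that the defining equations of $H$ restrict to genuine polynomial equations on $\textup{GL}(E)$. This part is sound. But there is a real disconnect between how you choose $E$ (from the truncation estimate at $g_0$) and how you propose to prove strictness (a separate density-and-continuity argument). The truncation estimate $|P^{(F)}(g_0,g_0^{-1})-P(g_0,g_0^{-1})|<|P(g_0,g_0^{-1})|/2$ tells you that $P^{(F)}$ is nonzero on the pair $\bigl(g_0|_F,(g_0^{-1})|_F\bigr)$, but this is not a pair of the form $(A,A^{-1})$: the $F$-block of $g_0^{-1}$ is not the inverse of the $F$-block of $g_0$. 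So you cannot directly conclude that $P$ fails to vanish on block elements of $E$ from this estimate; you would still have to rule out that $P^{(F)}$ vanishes identically on the honest diagonal $\{(A,A^{-1}):A\in\textup{GL}(E)\}$, which can happen for small $E$ (e.g.\ $P(M,N)=\langle Me_1,e_1\rangle\langle Ne_1,e_1\rangle-1$ with $E$ one-dimensional).

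The fallback density argument you sketch has further gaps that are more than technicalities. First, block truncations $\pi_E g_0\pi_E+\pi_{E^\perp}$ need not be invertible for any finite $E$: if $g_0$ is the unitary induced by a permutation of $\mathbb{N}$ consisting of a single bi-infinite orbit, then for every finite $F$ some $e_i$ with $i\in F$ is sent outside $\Span\{e_j:j\in F\}$, so the truncation kills a vector and is never invertible. Perturbing to force invertibility destroys the uniform bound on $\|\tilde g_n^{-1}\|$, which is exactly what you need to keep the approximants inside a bounded set where $P$ could be continuous. Second, inversion is not continuous in SOT or WOT, so $\tilde g_n\to g_0$ does not give $\tilde g_n^{-1}\to g_0^{-1}$ in any of these topologies, and you need convergence of \emph{both} arguments to pass to the limit in $P(\cdot,\cdot)$. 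Third, continuity of $P$ itself on norm-bounded sets in WOT/SOT requires a summability hypothesis on the coefficients (e.g.\ $\sum_i|\lambda_i|\sum_j|\mu_j|<\infty$); pointwise absolute convergence of the series for each fixed $(M,N)$ does not by itself yield uniform convergence on bounded sets, so this needs to be extracted from Definition~\ref{def_polynomial_maps} and justified. You correctly flag this step as the main obstacle, but as written it is a genuine gap rather than a detail to be filled in, and the argument would need to be reorganized (for instance, arguing directly that vanishing of the restricted polynomials on $\textup{GL}(E)$ for all $E$ forces vanishing of $P^{(F)}$ on arbitrary pairs of invertible $F\times F$ blocks, and then letting $F$ exhaust $\mathbb{N}$) rather than routed through an approximation of $g_0$ by block elements.
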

We call the subspace $E$ \emph{support} of the strict algebraic subgroup $H$ and the group $H_E$ the $E$-\emph{part} of $H$.
We are now ready to give the following
\begin{defn}\label{definition_finite_algebraic}
 A \emph{finite dimensional algebraic subgroup} is a standard algebraic subgroup of $\GL$ of the form $H_E$. 
\end{defn}
Hence, it follows by Lemma \ref{lemma_support} a characterization of finite dimensional algebraic subgroups.
\begin{lemma}\label{proposition_algebraic}
 If $E$ is a finite dimensional subspace of $\calH$ and $H$ is a subgroup of $\GL$ contained in $\textup{GL}(E)$, 
 then $H$ is algebraic in $\textup{GL}(E)$ if and only if it is a finite dimensional algebraic subgroup of $\GL$.
\end{lemma}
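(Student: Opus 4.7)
The strategy is to exploit the natural compatibility between ``standard polynomial maps'' on $L(\calH)\times L(\calH)$ and ordinary polynomial maps on the finite-dimensional space $L(E)\times L(E)$. I fix once and for all an orthonormal basis $(e_n)_{n\in\matN}$ of $\calH$ whose first $d=\dime E$ vectors span $E$, and use this very basis to define standard polynomials as in Definition \ref{def_polynomial_maps}. With respect to this basis, the subgroup $\textup{GL}(E)\subset\GL$ (viewed via the identity on $E^\perp$) is cut out by standard polynomial equations of degree one: $\langle g\,e_i,e_j\rangle=0$ whenever $\min(i,j)\leq d<\max(i,j)$, and $\langle g\,e_i,e_j\rangle=\delta_{ij}$ whenever $i,j>d$, together with the analogous equations for $g^{-1}$. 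In particular, for $g\in\textup{GL}(E)$ the only matrix entries of $g$ and $g^{-1}$ that vary are the $d^{2}$ entries of the $E$-blocks.

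For the implication $(\Leftarrow)$, suppose $H$ is algebraic in $\textup{GL}(E)$, cut out by a family $\calP$ of polynomials of bounded degree in the entries of $g|_E$ and $g^{-1}|_E$. Each such entry is itself a standard polynomial of degree one on $L(\calH)\times L(\calH)$, so every $P\in\calP$ extends verbatim to a standard polynomial of the same bounded degree. Augmenting $\calP$ by the family of standard polynomials cutting out $\textup{GL}(E)\subset\GL$ described above yields a family of standard polynomials of bounded degree whose zero locus inside $\GL$ is exactly $H$. Since every element of $H$ fixes $E^\perp$ pointwise and preserves $E$, the subgroup $H$ coincides with $H_E$ in the sense of Lemma \ref{lemma_support}, and is therefore a finite algebraic subgroup by Definition \ref{definition_finite_algebraic}.

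For the implication $(\Rightarrow)$, suppose $H$ is finite algebraic in $\GL$, hence cut out by a family of standard polynomial maps of degrees at most some fixed $n$. Restrict each such $P$ to $\textup{GL}(E)\subset\GL$. Since for $g\in\textup{GL}(E)$ every factor $\langle g\,e_i,e_j\rangle$ is either a variable entry of the $E$-block or a constant in $\{0,1\}$ depending only on whether $i=j$, the absolutely convergent series defining $P(g,g^{-1})$ can be regrouped according to the resulting monomials in the $E$-entries of $g$ and $g^{-1}$. There are only finitely many such monomials of degree at most $n$, and the coefficient of each is an absolutely convergent subseries of the original series; hence $P|_{\textup{GL}(E)}$ is a polynomial of degree at most $n$ in the entries of $g|_E$ and $g^{-1}|_E$. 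The zero locus of the restricted family in $\textup{GL}(E)$ is therefore $H\cap\textup{GL}(E)=H$, proving that $H$ is algebraic in $\textup{GL}(E)$.

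The only delicate step is the regrouping argument in the last paragraph, where one must check that the restriction of a standard polynomial to $\textup{GL}(E)$ is genuinely a polynomial and not merely some analytic function; this is made possible precisely by the bounded-degree condition built into the definition of an algebraic subgroup, which forces the regrouped expansion to have only finitely many monomial buckets. Everything else is bookkeeping in the chosen basis, and no deeper structural input than the definitions and Lemma \ref{lemma_support} is needed.
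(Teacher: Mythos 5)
Your proof is correct and, in both directions, somewhat more self-contained and careful than the paper's. The paper proves the forward implication in one line by invoking Lemma \ref{lemma_support}: finite algebraic means $H=H_E$, and Lemma \ref{lemma_support} then supplies algebraicity of $H_E$ inside $\textup{GL}(E)$. You instead argue directly, restricting the defining standard polynomials to $\textup{GL}(E)$ and regrouping the absolutely convergent series into finitely many monomial buckets in the $E$-block entries of $g$ and $g^{-1}$; the bounded-degree hypothesis built into the definition of an algebraic subgroup is what keeps the bucketing finite, as you note, and absolute convergence of each bucket coefficient holds because any sub-series of an absolutely convergent series converges absolutely. This yields a proof that does not pass through Lemma \ref{lemma_support} at all. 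For the converse, the paper lifts the polynomials defining $H$ in $\textup{GL}(E)$ to standard polynomials on $\GL$ and then asserts that the resulting group ``coincides with its $E$-part''; as written, the lifted polynomials alone do not constrain the action on $E^{\perp}$, so their zero locus in $\GL$ is strictly larger than $H$. You close this gap by augmenting the family with the degree-one standard equations cutting out $\textup{GL}(E)$ inside $\GL$ (forcing $g(E)=E$ and $g|_{E^{\perp}}=\id$), so that the zero locus is exactly $H$ and $H$ is manifestly its own $E$-part. In short: same overall strategy (translate polynomials back and forth between $\textup{GL}(E)$ and $\GL$ via a basis adapted to $E$), but your write-up bypasses Lemma \ref{lemma_support} in one direction and tightens a slightly imprecise step in the other.
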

\begin{proof}
 If $H$ is a finite dimensional algebraic subgroup of $\GL$ then $H=H_E$ and by Lemma \ref{lemma_support} it is algebraic in $\textup{GL}(E)$.
 Conversely, if $H$ is algebraic in $\textup{GL}(E)$, it is also an algebraic subgroup in $\GL$. Moreover, 
 any polynomial which defines $H$ on $\textup{GL}(E)$ can be turned into a polynomial on the entries of the matrices. 
 Hence the same polynomials, seen as standard polynomial maps in the sense of Definition \ref{def_polynomial_maps},
 define a standard algebraic subgroup in $\GL$.
 Since it
 fixes $E^{\perp}$
 then it coincides with its $E$-part and we are done.
\end{proof}

The group $\upinf$ is algebraic subgroup of $\GL$.
Indeed, if $V_0\coloneqq\Span \{e_1,\ldots e_p\}$,
we have that
$$\upinf =\{g\in \GL \;|\; g^{*}\Id_{p,\infty}g=\Id_{p,\infty}\}$$
where $\Id_{p,\infty}$ is the linear map $\Id_{V_0}\oplus -\Id_{V_0^{\perp}}$. Since the 
map $(A,B)\mapsto A^*\Id_{p,\infty}B-\Id_{p,\infty}$ is bilinear on $L(\calH)\times L(\calH)$ then $\upinf$ is algebraic in $\textup{GL}(\calH_{\matR})$
and hence in $\GL$ (see \cite{duchesne:pozzetti} for more details).
By Proposition \ref{proposition_algebraic} we can say immediately that the groups $\uppq$ with $q<\infty$, 
seen as subgroups of $\upinf$ inside $\GL$, are actually finite algebraic since they stabilize the embedding of $\Simpq$ inside $\Sim$. 

Since we work with the quotients $\pupq$ instead of the groups $\uppq$, 
we call \emph{finite algebraic} a subgroup of $\pup$ if its preimage under the projection $\upinf\rightarrow\pup$ is finite algebraic in $\GL$ in the 
sense of Definition \ref{definition_finite_algebraic}.

\subsection{Proof of finite reducibility.}\label{section_proof_reducibility}
Given a measurable cocycle $\sigma:\Gamma \times \Omega\rightarrow \pup$, one can ask when its image is contained in some suitable subgroup of $\pup$.

\begin{defn}\label{def_finite_red}
 A cocycle $\sigma:\Gamma \times \Omega\rightarrow \pup$ is \emph{finitely reducible} if it admits a cohomologous cocycle
 with image contained in a finite dimensional algebraic subgroup of $\pup$.
\end{defn}

Before proving the main theorem, we recall that a $p$-\emph{chain} is a copy of $\mathcal{I}_p(p,p)$ in $\mathcal{I}_p(p,\infty)$ determined by an embedding of $\mathcal{X}(p,p)$ in $\mathcal{X}(p,\infty)$. 

\begin{defn}
 A measurable map $\phi: \partial\matH\rightarrow \calI$ \emph{almost surely maps chains to chains} if for almost every
chain $\calC \subset \partial \matH$ there is a $p$-chain $\calT\subset \calI$ such that for almost every point $\xi \in \calC$,
$\phi(\xi)\in \calT$.
\end{defn}

An equivalent condition \cite[Lemma 4.2]{Pozzetti} to the one above is to check that, 
for almost every pair $(x,y)\in \partial\matH\times\partial\matH$, the points $\phi(\xi_0)$ and $\phi(\xi_1)$ are opposite and, 
for almost every $z\in \calC_{\xi_0,\xi_1}$, the subspace $\phi(z)$ is contained in $\langle \phi(\xi_0),\phi(\xi_1)\rangle$.
Before passing to the proof of Theorem \ref{theorem_reducibility}, we need the following result about maps that almost surely maps chains to chains, which is a slight refinement of \cite[Proposition 6.2]{duchesne:pozzetti}. 
Since there is a natural embedding $\partial\matH\subset \mathbb{P}^n\mathbb{C}$, we can say that a set of $k\leq n+1$ points in $\partial\matH$ is \emph{generic} 
if, for every $1<h\leq k$, any subset of $h$ points does not span a $(h-2)$-dimensional subspace.

\begin{lemma}\label{lemma_unicity}
Let $\phi: \partial\matH\rightarrow \calI$ be a measurable map that almost surely maps chains to chains.
Then there exists a unique minimal totally geodesic embedded copy of $\calX(p,q)\subset\calX(p,\infty)$ that contains the image of almost every $(n+1)$-tuple of generic points in $\partial\matH$. Moreover, $p\leq q\leq np$.
\end{lemma}

\begin{proof}
We argue by induction on $n$. The case $n=1$ is clear, since there is only one chain $\calC$ in $\partial\mathbb{H}^1_{\matC}$ 
and for almost every $\eta_1,\eta_2\in \calC$ the subspace $\langle \phi(\eta_1),\phi(\eta_2)\rangle\subset \calH$ defines a copy of $\calX(p,p)\subset \calX(p,\infty)$. The fact that $\phi$ almost surely maps chains to chains implies that for almost every $\xi$ in $\partial\matH$ we have $\phi(\xi)< \langle \phi(\eta_1),\phi(\eta_2)\rangle$. 

Assume that the statement holds for $n-1$.
Thanks to the construction in \cite[Section 7.1]{duchesne:pozzetti}, we can define a full-measure subset $\calG$ of the set of $(n+1)$-tuple of points in general position of $\partial\matH$ such that for every $(\xi_0,\ldots,\xi_n)\in \calG$ the following conditions hold:
\begin{itemize}
\item $\phi_{|\langle \xi_0,\ldots,\xi_{n-1}\rangle}$ almost surely maps chains to chains;
\item for almost every $\eta \in\langle\xi_{0},\ldots,\xi_{n-1}\rangle$ then $\langle\phi(\eta),\phi(\xi_{n-1})\rangle$ is a $2p$-dimensional subspace on which the restriction of $Q$ has signature $(p,p)$;
\item for almost every $\eta \in\langle\xi_{n-1},\xi_n\rangle$ then $\langle\phi(\eta),\phi(\xi_{n-1})\rangle$ is a $2p$-dimensional subspace on which the restriction of $Q$ has signature $(p,p)$;
\item for almost every $\eta_1 \in\langle\xi_{n-1},\xi_n\rangle,\,\eta_2 \in\langle\xi_{0},\ldots,\xi_{n-1}\rangle$ the space $\langle\phi(\eta_1),\phi(\eta_2)\rangle$ has dimension $2p$ and the restriction of $Q$ has signature $(p,p)$.
\end{itemize}

As proved in \cite[Proposition 6.2]{duchesne:pozzetti}, for almost every $(\xi_0,\ldots,\xi_n)\in \calG$ the space
$$V_{\xi_0,\ldots,\xi_n}\coloneqq \langle\phi(\xi_{0}),\ldots,\phi(\xi_n)\rangle$$
contains $\phi(\eta)$ for almost every $\eta \in \partial\matH$. 
Furthermore, the restriction of $Q$ to $V_{\xi_0,\ldots,\xi_n}$ is non-degenerate of signature $(p,q)$ with $p\leq q\leq np$.

We now prove that almost every pair of tuple $((\xi_0,\ldots,\xi_n),(\eta_0,\ldots,\eta_n)) \in \calG^2$ give the same subspace. 
We first note that, since $V_{\xi_0,\ldots,\xi_n}$ contains the image of almost every point in $\partial\matH$, it clearly contains $\phi(\eta_{0}),\ldots,\phi(\eta_n)$, and hence $\langle\phi(\eta_{0}),\ldots,\phi(\eta_n)\rangle$, for almost every $(\eta_0,\ldots,\eta_n)\in \calG$. 
Hence there exists a full-measure subset $\calQ\subset \calG\times \calG$ such that
$$V_{\xi_0,\ldots,\xi_n}<V_{\eta_0,\ldots,\eta_n}$$
 for almost every $((\xi_0,\ldots,\xi_n),(\eta_0,\ldots,\eta_n))\in \calQ$.
By taking the measure-preserving idempotent function of $\calG\times \calG$ which swap the tuple, one gets a second full-measure subsets $\overline{\calQ}$.
Hence the intersection
$\calQ\cap \overline{\calQ}$ is a full-measure subset of $\calG\times \calG$ of pairs $(\xi_0,\ldots,\xi_n),(\eta_0,\ldots,\eta_n)$ such that
$$V_{\xi_0,\ldots,\xi_n}=V_{\eta_0,\ldots,\eta_n}\,,$$
which implies the uniqueness.
 
 A similar argument can be used to prove minimality, namely that every linear subspace $W<\calH$ containing the image of a full-measure subset of $\partial\matH$ must contain the spaces constructed above. 
\end{proof}

\begin{oss}
It seems natural to investigate the effective dimension of the copy of $\partial\calX(p,q)$ which contains the essential image of $\phi$ provided by Lemma \ref{lemma_unicity}.
For instance, given a chain preserving map $\psi:\partial\matH\rightarrow \partial\mathbb{H}_{\matC}^p$, Burger and Iozzi \cite{BIcartan} proved the following dichotomy: if the image of almost every triple $(\xi_0,\xi_1,\xi_2)$ of generic points is generic as well, then $\psi$ coincides almost everywhere with the map induced on boundaries by an isometric holomorphic embedding $\matH\rightarrow \mathbb{H}_{\matC}^p$. If not, then the image is essentially contained into a chain in $\partial\mathbb{H}^p$.  

In our more general context, we do not know if such a dichotomy holds.
However,  in our setting, the two cases described above can be interpreted as the limit cases as follows.
In fact, if $\phi:\partial\matH\rightarrow \calI$ as in Lemma \ref{lemma_unicity} sends almost every $(n+1)$-tuple of generic points of $\partial\matH$ to $(n+1)$ generic points of $\calI$, then we have that the essential image of $\phi$ is contained in $\partial\calX(p,np)$.
On the other hand, by the same argument used in \cite[Proposition 2.2]{BIcartan}, if there is a positive measure subset of triple in $(\partial\matH)^3$ not on a chain whose image lies on a chain, then the image of $\phi$ is essentially contained into one copy of $\partial\calX(p,p)$. 
We point out that this two cases do not produce a dichotomy, but a characterization of the cases when $q=p$ and $q=np$ in the notation of Lemma \ref{lemma_unicity}. 
\end{oss}

Now we are ready to give the proof of the main result of this part. 
\begin{proof}[Proof of Theorem \ref{theorem_reducibility}]
By the Equation \eqref{formula} and using \cite[Corollary 6.1]{duchesne:pozzetti}, it follows that 
almost every slice $\phi_{\omega}$ almost maps chains to chains. 
Hence, by Lemma \ref{lemma_unicity}, for almost every $\omega\in \Omega$ 
there exists a unique minimal totally geodesic embedding $\mathcal{X}_{\omega}(p,q_{\omega})\subset \Sim$ such that
$\EssIma(\phi_{\omega})\subset \partial\mathcal{X}_{\omega}(p,q_{\omega})$ for some $p\leq q_{\omega}\leq np$. 
Notice the $\mathcal{X}_\omega$ depends only on the measure class of $\phi_\omega$. By \cite[Chapter VII, Lemma 1.3]{margulis:libro} the function
$$
\Phi:\Omega \rightarrow \mathrm{L}^0(\Omega, \mathcal{I}_p) \ , \ \ \ \Phi(\omega):=\phi_\omega
$$
is measurable, where $\mathrm{L}^0(\Omega, \mathcal{I}_p)$ has again the measurable structure coming from the topology of convergence in measure (the distance on $\mathcal{I}_p$ is the one of Remark \ref{remark_embeddings}). By Fubini's Theorem, there exist $\xi_0,\ldots,\xi_n \in \partial \mathbb{H}^n_{\mathbb{C}}$ so that $\mathcal{X}_\omega(p,q_\omega):=\langle \phi_\omega(\xi_0),\ldots \phi_\omega(\xi_n) \rangle$ for almost every $\omega \in \Omega$. Since $\phi_\omega$ depends measurably on $\omega$, the same holds for $\mathcal{X}(p,q_\omega)$.

Moreover, the equivariance of $\phi$ implies that 
$$\sigma(\gamma,\omega)\mathcal{X}_{\omega}(p,q_{\omega})=\calX_{\gamma \omega}(p,q_{\gamma \omega})$$
for almost every $\gamma\in \Gamma$ and $\omega\in \Omega$. By the ergodicity of $\Omega$, 
the number $q_\omega$ is essentially constant, namely $q_{\omega}=q$ for almost every $\omega\in \Omega$.
If we denote by $\iota_{\omega}$ the isometric linear 
map that induces the embedding $\mathcal{X}_{\omega}(p,q)\subset \Sim$,
the uniqueness of $\mathcal{X}_{\omega}(p,q)$, together with the $\sigma$-equivariance of $\phi$, implies that the map 
\begin{equation}\label{equation_map}
\Omega\rightarrow  \pup/\Stab_{\pup}(V_0)\,,\;\;\;\; \omega\mapsto \mathcal{X}_{\omega}(p,q)
\end{equation}
is measurable (with respect to the measurable structure discussed in Remark \ref{remark_embeddings}) and $\sigma$-equivariant.
Here $\Stab_{\pup}V_0$ is the subgroup of $\pup$ preserving the subspace $V_0$.
Now, thanks to the differentiable structure of the group $\pup$, we can compose the function in Equation \eqref{equation_map} with a measurable section 
$$\pup/\Stab_{\pup}(V_0)\rightarrow \pup$$ in order to obtain a measurable map
$$f:\Omega\rightarrow \pup,\;\;\;\; f(\omega)=g_{\omega}^{-1}.$$
By construction, $f(\omega)$ sends $\calX_{\omega}(p,q)$ to the standard embedded copy $\calX(p,q) \subset \calX(p,\infty)$.

We consider the twisted cocycle 
$\sigma^f:\Gamma \times \Omega\rightarrow \pup$ defined as
$$\sigma^f(\gamma,\omega)\coloneqq f(\gamma \omega)^{-1}\sigma (\gamma,\omega)f(\omega)$$
and the associated twisted boundary map 
$\phi^f:\partial\matH \times \Omega\rightarrow \calI$
defined by
$$\phi^f(\xi,\omega)\coloneqq f(\omega)^{-1} \phi(\xi,\omega)\ .$$
Now, by definition of $f$, for almost every $\omega\in \Omega$ the image of almost every slice $\phi_{\omega}$ is contained in the boundary of a fixed
$\mathcal{X}(p,q)$. For almost every $\omega\in \Omega$, denote by
$E_{\omega}$ the full measure set of points $\xi$ in $\partial\matH$ such that $\phi^f_{\omega}(\xi)\in \partial\mathcal{X}(p,q)$.
Consider now the set $E=\bigcup\limits_{\omega\in \Omega} E_{\omega}\times\{\omega\}$ (that is of full measure in $\partial\matH\times \Omega$,
by Fubini's theorem) and the diagonal action of $\Gamma$ given by
$$\gamma (\xi,\omega) = (\gamma\xi, \gamma \omega).$$
Since $\Gamma$ is countable, we find an invariant full measure subset
$\overline{E}$ such that $\phi^f(\overline{E})\subset \partial\mathcal{X}(p,q)$. More precisely, we set
$$\overline{E}=\bigcap\limits_{\gamma\in \Gamma} \gamma E \ ,$$
where $\gamma$ acts diagonally. Being the countable intersection of full measure sets, it is clear that $\overline{E}$ has full measure. 
Now, since the image of a full measure set under $\phi^f$ is contained in the boundary of the embedded $\mathcal{X}(p,q)$, it follows that 
the image of the twisted cocycle $\sigma^f$ is contained in $\Stab_{\pup}V_0$, which is finite algebraic as desired.
\end{proof}

\begin{oss}
 The descending chain condition that holds for Noetherian spaces (as algebraic groups are), allows to define the algebraic hull for cocycles into algebraic groups.
 This can not be adapted for $\pup$, namely there exits no well-defined minimal strict algebraic group containing the image of a twisted cocycle. 
 Nevertheless, by Theorem \ref{theorem_reducibility}, any maximal cocycles has a representative in its cohomology class 
whose image is contained into the embedding of $\pupq$ in $\pup$, which is algebraic.
For such particular measurable cocycles, our result recover an algebraic flavour.
\end{oss}

\section{Consequences of finite reducibility}\label{section_consequences}
 The aim of this last section is to link Theorem \ref{theorem_boundary_map} and Theorem \ref{theorem_reducibility}.
We consider the setting of Theorem \ref{theorem_reducibility}, namely $\Gamma$ is a complex hyperbolic lattice, $(\Omega,\mu)$ is an ergodic standard Borel probability $\Gamma$-space and $\sigma:\Gamma \times \Omega\rightarrow \pup$ is a maximal cocycle. 
If we assume that $\sigma$ is non-elementary, Theorem \ref{theorem_boundary_map} provides a boundary map $\phi:\partial\matH\times \Omega\rightarrow \partial\calX(p,\infty)$. 
Moreover, by Remark \ref{remark_boundary_fiinite_dimensional} such a map takes values into $\calIk$ for some $k\leq p$.
Unfortunately, this is not sufficient to prove reducibility as in Theorem \ref{theorem_reducibility}, since such $k$ might be strictly less then $p$.

However, for cocycles $\sigma:\Gamma \times \Omega\rightarrow \puoneinf$ one can exploit the geometry of $\calX(1,\infty)=\mathbb{H}^{\infty}_{\matC}$ and of its boundary to prove Theorem \ref{corollary_mostow}.

\begin{proof}[Proof of Theorem \ref{corollary_mostow}]
We first show that maximal cocycles cannot be non-elementary. 
In fact, by ergodicity, a $\sigma$-equivariant family of flats can be made of points or lines.
In both cases one can twist $\sigma$ into a cocycles whose image is contained either in the stabilizer of a point or a geodesic, which are both amenable. Since amenable groups have trivial bounded cohomology, we have a contradiction to maximality.

Since $\sigma$ is not elementary, Theorem \ref{theorem_boundary_map} provides a boundary map $\partial\matH\times \Omega\rightarrow \partial\mathbb{H}^{\infty}_{\matC}$ and then we can apply Theorem \ref{theorem_reducibility}. Hence we have that $\sigma$ is cohomologous to a cocycle $\widetilde{\sigma}$ whose image is contained in the stabilizer of an embedded copy of $\matH$ in $\mathbb{H}_{\matC}^{\infty}$. 
The stabilizer $\Stab_{\puoneinf}(\matH)$ is an almost direct product with one factor isomorphic to $\su$. By composing with the projection on such factor we get a maximal cocycle. Hence we can apply \cite[Theorem 1.5]{moraschini:savini:2} and we are done.
\end{proof}

\begin{oss}
In the general setting of Theorem \ref{theorem_reducibility}, as pointed out in Remark \ref{remark_boundary_fiinite_dimensional}, Theorem \ref{theorem_reducibility} provides a boundary map into some $\calIk$. 
In \cite{duchesne:pozzetti} the authors exploited Proposition \ref{proposition:bdl} to rule out the case $k<p$ for Zariski dense representations. 
In the tentative to adapt such argument in the context of cocycles, we stuck in the final part. Precisely, following the proof of \cite[Theorem 1.7]{duchesne:pozzetti}, one can construct a $\sigma$-equivariant family $\{W_{\omega}\}_{\omega\in \Omega}$ of non-trivial subspaces of $\Lambda^d\calH$ for some $d$. 
Since the stabilizer of such spaces are standard algebraic subgroups, it would be enough to twist the cocycle in order to get a cocycle with image contained in one of this stabilizers.
However, the action of $\pup$ on the subspaces (a priori of infinite dimension) of $\Lambda^d\calH$ seems to us quite mysterious. Even before, one should clarifies the measurable structures involved.
To conclude as in the proof of Theorem \ref{theorem_reducibility} or \cite[Theorem 2]{sarti:savini:superrigidity} one should identify the $\pup$-orbit of some $W_{\omega}$ with the quotient $\pup/\Stab_{\pup} W_{\omega}$, for instance by proving that the action is smooth, which is also not clear to us. 
\end{oss}

\bibliographystyle{amsalpha}

\bibliography{biblionote}

\end{document}